\newtheorem{theorem}{Theorem}[section]
\newtheorem{lemma}[theorem]{Lemma}
\newtheorem{corollary}[theorem]{Corollary}
\newtheorem{theorema}{Theorem}
\theoremstyle{definition}
\newtheorem{definition}[theorem]{Definition}
\newtheorem{remark}[theorem]{Remark}
\newtheorem{question}[theorem]{Question}
\theoremstyle{remark}
\begin{document}

\title[Product decompositions and $B$-rigidity]{Product decompositions of moment-angle manifolds and $B$-rigidity
} 

\author[S.\ Amelotte]{Steven Amelotte}
\address{Institute for Computational and Experimental Research in Mathematics, Brown University, Providence, RI 02903, U.S.A.}
\email{steven\_amelotte@brown.edu} 

\author[B.\ Briggs]{Benjamin Briggs}
\address{Mathematical Sciences Research Institute, 17 Gauss Way, Berkeley, CA 94720, U.S.A.}
\email{bbriggs@msri.org}

\keywords{Moment-angle complex, cohomological rigidity, Stanley--Reisner ring, quasitoric manifold}
\subjclass[2020]{13F55, 57S12, 55U10}

\thanks{During this work the first author was hosted by the Institute for Computational and Experimental Research in Mathematics in Providence, RI, supported by the National Science Foundation under Grant No.\ 1929284.
The second author was hosted by the Mathematical Sciences Research Institute in Berkeley, California,
  supported by the National Science Foundation under Grant No.\ 1928930}

\begin{abstract}
A simple polytope $P$ is called \emph{B-rigid} if its combinatorial type is determined by the cohomology ring of the moment-angle manifold $\mathcal{Z}_P$ over $P$. We show that any tensor product decomposition of this cohomology ring is geometrically realized by a product decomposition of the moment-angle manifold up to equivariant diffeomorphism. As an application, we find that $B$-rigid polytopes are closed under products, generalizing some recent results in the toric topology literature. Algebraically, our proof establishes that the Koszul homology of a Gorenstein Stanley--Reisner ring admits a nontrivial tensor product decomposition if and only if the underlying simplicial complex decomposes as a join of full subcomplexes.
\end{abstract}

\maketitle

\section{Introduction}

Problems surrounding cohomological rigidity have received a great deal of attention throughout the development of toric topology; for a recent overview, we recommend \cite{FMW20} and references therein. Recall that a family $\mathcal{C}$ of spaces (or of smooth manifolds) is called \emph{cohomologically rigid} if for all $X$, $Y\in \mathcal{C}$, a ring isomorphism $H^\ast(X)\cong H^\ast(Y)$ implies that $X$ and $Y$ are homeomorphic (resp.\ diffeomorphic). Toric topology associates to each simplicial complex $K$ with $m$ vertices the \emph{moment-angle complex} $\mathcal{Z}_K$, a finite CW-complex with an action of the torus $T^m=(S^1)^m$, whose equivariant topology is intimately tied to the combinatorics of $K$ and the homological algebra of its Stanley--Reisner ring. For example, if $K=\partial P^*$ for some simple polytope $P$, then $\mathcal{Z}_P\coloneqq \mathcal{Z}_K$ has the structure of a smooth $T^m$-manifold. More generally, when the Stanley--Reisner ring $\mathbb{Z}[K]$ is Gorenstein, $\mathcal{Z}_K$ is a topological manifold called a \emph{moment-angle manifold}. Computational evidence and a lack of counterexamples (despite the attention of many authors) makes a positive solution to the cohomological rigidity problem for this family of manifolds plausible.

An individual moment-angle manifold is said to be \emph{cohomologically rigid} if its homeomorphism type among all moment-angle manifolds is distinguished by its cohomology ring. Since the combinatorial type of $K$ determines $\mathcal{Z}_K$ up to equivariant homeomorphism, one way to produce cohomologically rigid moment-angle manifolds is to find simplicial complexes $K$ whose combinatorial type is uniquely determined by the ring $H^\ast(\mathcal{Z}_K)$.

\begin{question} \label{B_question}
Let $K_1$ and $K_2$ be simplicial complexes and let $k$ be a field. When does a graded ring isomorphism $H^\ast(\mathcal{Z}_{K_1};k)\cong H^\ast(\mathcal{Z}_{K_2};k)$ imply a combinatorial equivalence $K_1\simeq K_2$?
\end{question}

The question above is due to Buchstaber \cite{B}, and simplicial complexes or simple polytopes for which the answer is positive are known as \emph{B-rigid} (see Section~\ref{rigidity_sec} for precise definitions). 
Question~\ref{B_question} is reduced to a commutative algebraic problem by (1) the Stanley--Reisner correspondence between simplicial complexes and square-free monomial rings, and (2) an identification of $H^\ast(\mathcal{Z}_K;k)$ with the Tor-algebra of the Stanley--Reisner ring $k[K]$ (see Section~\ref{prelim_sec}).

$B$-rigid complexes seem to be rare in general (and indeed, combinatorially distinct polytopes often define diffeomorphic moment-angle manifolds, see \cite[Example~3.4]{BEMPP}), but examples to date have proven useful in establishing a variety of rigidity results for moment-angle manifolds, (quasi)toric manifolds, small covers and other related spaces. 

For example, flag simplicial $2$-spheres without chordless cycles of length $4$ were shown to be $B$-rigid by Fan, Ma and Wang \cite{FMW15}, implying the cohomological rigidity of a large class of moment-angle manifolds including $\mathcal{Z}_P$ for all \emph{fullerenes} $P$ (i.e. simple $3$-polytopes with only pentagonal and hexagonal facets). More generally, this class of simplicial $2$-spheres contains the dual simplicial complexes $\partial P^*$ of all \emph{Pogorelov} polyopes $P$, and their $B$-rigidity was used by Buchstaber et al.\ \cite{BEMPP} to obtain cohomological rigidity results for all quasitoric manifolds and small covers over these $3$-polytopes.

Finite products of simplices are $B$-rigid by a result of Choi, Panov and Suh~\cite{CPS}. This was used to show any quasitoric manifold $M$ with $H^\ast(M)\cong H^\ast(\prod_{i=1}^\ell \mathbb{C}P^{n_i})$ is in fact homeomorphic to $\prod_{i=1}^\ell \mathbb{C}P^{n_i}$ \cite[Theorem~1.5]{CPS}. Quasitoric manifolds over products of simplices include an important family of toric varieties known as \emph{(generalized) Bott manifolds}, and here $B$-rigidity implies that if a quasitoric manifold has cohomology isomorphic to that of a generalized Bott manifold, then their orbit spaces are combinatorially equivalent.

The $B$-rigidity of a prism (i.e.\ a product of a polygon and an interval) follows from \cite{CK}, and this result was generalized to the product of a polygon and any simplex by Choi and Park in their study of projective bundles over toric surfaces \cite{CP}. We will return to this example in Section~\ref{quasitoric_sec}.

These last examples motivate the question of whether $B$-rigid polytopes are closed under products. This question is also raised in work of Bosio~\cite{Bo} where a weaker property for polytopes, \emph{puzzle-rigidity}, is shown to be closed under products. The purpose of this short note is to answer this question affirmatively.

\begin{theorema}\label{thmA}
The collection of B-rigid Gorenstein complexes is closed under finite joins. Consequently, the collection of B-rigid polytopes is closed under finite products.
\end{theorema}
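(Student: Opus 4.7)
The plan is to deduce Theorem~\ref{thmA} from the algebraic criterion advertised in the abstract: for a Gorenstein Stanley--Reisner ring $k[K]$, the Tor algebra $H^\ast(\mathcal{Z}_K;k)$ admits a nontrivial tensor decomposition as a graded $k$-algebra if and only if $K$ decomposes as a join of two full subcomplexes. Assuming this criterion, I would first reduce the polytope statement to the simplicial one: for simple polytopes $P_1,P_2$, the product $P_1\times P_2$ is simple with $\partial(P_1\times P_2)^\ast \cong (\partial P_1^\ast)\ast(\partial P_2^\ast)$, and boundaries of simplicial polytopes are Gorenstein simplicial spheres. Thus it suffices to show that a join $K_1\ast K_2$ of two B-rigid Gorenstein complexes is B-rigid; a trivial induction, combined with the fact that joins of Gorenstein complexes are Gorenstein, then handles finite joins.

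For this, suppose $L$ is a simplicial complex with $H^\ast(\mathcal{Z}_L;k)\cong H^\ast(\mathcal{Z}_{K_1\ast K_2};k)\cong H^\ast(\mathcal{Z}_{K_1};k)\otimes H^\ast(\mathcal{Z}_{K_2};k)$. The Gorenstein property of $k[K_1\ast K_2]$ is encoded as Poincar\'e duality in its Tor algebra, so the isomorphism forces $k[L]$ to be Gorenstein as well. Iteratively applying the criterion, one obtains join-atomic refinements $K_i = K_{i,1}\ast\cdots\ast K_{i,n_i}$ and $L = M_1\ast\cdots\ast M_r$ whose moment-angle cohomologies are tensor-indecomposable connected graded $k$-algebras. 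A Krull--Schmidt argument for tensor decompositions of such algebras of finite total dimension matches the two resulting factorizations of $H^\ast(\mathcal{Z}_L;k)$: there is a bijection $\sigma$ with $H^\ast(\mathcal{Z}_{M_{\sigma(i,j)}};k)\cong H^\ast(\mathcal{Z}_{K_{i,j}};k)$. Regrouping the $M_\alpha$ according to which $K_i$ they correspond to, the join of each group realizes $H^\ast(\mathcal{Z}_{K_i};k)$, and B-rigidity of $K_i$ upgrades this to a combinatorial equivalence $K_i\simeq M_{\sigma(i,1)}\ast\cdots\ast M_{\sigma(i,n_i)}$. Reassembling yields $L\simeq K_1\ast K_2$, which is the desired B-rigidity.

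The main obstacle is the algebraic criterion itself. The easy direction (a join of complexes produces a tensor product of Tor algebras) follows from the homeomorphism $\mathcal{Z}_{K_1\ast K_2}\cong \mathcal{Z}_{K_1}\times\mathcal{Z}_{K_2}$. The hard direction requires extracting a combinatorial partition of the vertex set of $K$ from an abstract tensor decomposition of its Koszul homology. The Gorenstein hypothesis should enter through Poincar\'e duality, which constrains the fundamental class of $H^\ast(\mathcal{Z}_K;k)$ to split as a product of top classes from the two would-be factors; from there the plan is to recover the vertex partition by tracing which squarefree-monomial Koszul cycles generate each tensor factor. That reconstruction of the vertex split from purely homological data is where I expect the technical core of the argument to live, with the Krull--Schmidt uniqueness for these tensor decompositions being a secondary but standard point.
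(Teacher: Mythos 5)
Your route and the paper's route start from the same place, but the paper proves and uses a strictly stronger tool than the ``iff'' criterion you take as input, and that difference is exactly what covers the step you wave away as ``a secondary but standard point.''

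The paper's Theorem~\ref{main_decomp_th} does not merely assert that $H^\ast(\mathcal{Z}_K;k)$ decomposes nontrivially as a tensor product if and only if $K$ decomposes nontrivially as a join (that is Corollary~\ref{tfae_cor}). It asserts that \emph{any given} tensor decomposition $H^\ast(\mathcal{Z}_K;k)\cong A_1\otimes\cdots\otimes A_\ell$ is realized by a join decomposition $K=K_1\ast\cdots\ast K_\ell$ with $H^\ast(\mathcal{Z}_{K_i};k)\cong A_i$ \emph{for that specific decomposition}. The proof achieves this by tracking the multidegree supports $U,V$ of the two halves of the fundamental class and showing that the projections $\mathrm{proj}_{\subseteq U}$, $\mathrm{proj}_{\subseteq V}$ carry $\varphi(A_1)$, $\varphi(A_2)$ isomorphically onto $H^\ast(\mathcal{Z}_{K_U};k)$ and $H^\ast(\mathcal{Z}_{K_V};k)$. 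With that in hand, Corollary~\ref{product_cor} is immediate: given $H^\ast(\mathcal{Z}_L;k)\cong H^\ast(\mathcal{Z}_{K_1};k)\otimes H^\ast(\mathcal{Z}_{K_2};k)$, the theorem produces $L=L_1\ast L_2$ with $H^\ast(\mathcal{Z}_{L_i};k)\cong H^\ast(\mathcal{Z}_{K_i};k)$, and $B$-rigidity of each $K_i$ finishes the argument. No factor-matching is left to do.

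Your version only uses the ``iff'' and then, to match up the atoms of the two join refinements of $L$ and $K_1\ast K_2$, appeals to a Krull--Schmidt theorem for tensor decompositions of finite-dimensional connected graded $k$-algebras. That is not a routine lemma. Unique tensor factorization is delicate even in the commutative finite-dimensional setting; the algebras here are not generated in a single degree and carry nontrivial products across degrees, and I am not aware of an off-the-shelf result covering this class. So as written there is a genuine gap. The good news is that what you would need is essentially a special case of what the paper's Theorem~\ref{main_decomp_th} already gives: it shows (for these specific algebras) that any tensor decomposition is realized geometrically, which is precisely the factor-matching data your regrouping step requires. If you strengthen the criterion you assume to the full Theorem~\ref{main_decomp_th}, your deduction collapses to the paper's two-line argument and the Krull--Schmidt invocation disappears.

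Two small remarks. Your observation that $k[L]$ is Gorenstein over $k$ because Poincar\'e duality of $H^\ast(\mathcal{Z}_{K_1\ast K_2};k)$ transports along the isomorphism is correct and needed (the paper uses Theorem~\ref{PD_thm} implicitly at this point), and your reduction from polytopes to Gorenstein$^*$ spheres via $\partial(P_1\times P_2)^\ast\simeq \partial P_1^\ast\ast\partial P_2^\ast$ matches the paper. Your outline of where the Gorenstein hypothesis enters the hard direction of the decomposition theorem (splitting the fundamental class and reading off a vertex partition from multidegrees of the Koszul cycles in $\operatorname{Tor}_1$) is exactly the mechanism of the paper's proof of Theorem~\ref{main_decomp_th}, so your intuition about where the technical core lives is right; the point is that once that core is done carefully, it yields the matching for free and you should use it.
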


This result substantially extends the class of known $B$-rigid polytopes, and subsumes those products  established to be $B$-rigid  in \cite{CK,CPS,CP}. We deduce Theorem~\ref{thmA} from the following more general structure theorem for moment-angle manifolds.

\begin{theorema}\label{thmB}
Let $K$ be a simplicial complex on $m$ vertices and let $k$ be a field. If $K$ is Gorenstein over $k$ and the cohomology ring of $\mathcal{Z}_K$ admits a tensor product decomposition 
\[
H^\ast(\mathcal{Z}_K;k) \cong A_1\otimes\cdots\otimes A_\ell 
\] 
as graded $k$-algebras, then there is a $T^m$-equivariant homeomorphism
\[
\mathcal{Z}_K \cong \mathcal{Z}_{K_1}\times\cdots\times \mathcal{Z}_{K_\ell} 
\] 
with $H^\ast(\mathcal{Z}_{K_i};k) \cong A_i$ for each $i=1,\ldots,\ell$. 
\end{theorema}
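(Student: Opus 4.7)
My plan is to reduce Theorem~\ref{thmB} to the algebraic statement flagged in the abstract: that the Koszul homology $\kappa(k[K]) \coloneqq \mathrm{Tor}^*_{k[v_1,\ldots,v_m]}(k[K], k) \cong H^*(\mathcal{Z}_K; k)$ admits a nontrivial graded tensor factorization if and only if $K$ decomposes as a join $K|_{V_1} * \cdots * K|_{V_\ell}$ along a partition $[m] = V_1 \sqcup \cdots \sqcup V_\ell$ of the vertex set. Once such a join decomposition is produced with $\kappa(k[K|_{V_i}]) \cong A_i$, the theorem follows from the standard fact that the moment-angle functor converts joins to products $T^m$-equivariantly.

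To establish the algebraic statement, I would exploit the $\mathbb{Z}^m$-multigrading on $\kappa(k[K])$ inherited from the polynomial ring. By Hochster's formula, each multigraded summand is concentrated in a squarefree multidegree $\sigma \subseteq [m]$ and is identified with $\widetilde{H}^*(K_\sigma; k)$. This yields two essential features: (a) a \emph{squarefree multiplication law}---a product of multi-homogeneous classes with multi-supports $\sigma$ and $\tau$ vanishes unless $\sigma \cap \tau = \emptyset$; and (b) a one-dimensional socle, forced by the Gorenstein hypothesis, concentrated in the top multidegree $(1,\ldots,1)$ and spanned by the fundamental class $\omega$. Under the tensor decomposition the socle splits as a tensor product of socles, so $\omega$ factors as $\omega_1 \cdots \omega_\ell$ with $\omega_i \in A_i$ the socle generator of each tensorand.

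The crux---and what I expect to be the main obstacle---is to refine the algebra decomposition into one compatible with the multigrading, equivalently, to show that each $\omega_i$ may be chosen multi-homogeneous. Expanding $\omega_i = \sum_\sigma \omega_{i,\sigma}$ multigraded-homogeneously, features (a) and (b) force the surviving terms of the product $\omega_1 \cdots \omega_\ell$ to be indexed by ordered partitions $[m] = \sigma_1 \sqcup \cdots \sqcup \sigma_\ell$; combining this with the Poincar\'e duality inherited by each $A_i$ and the nilpotence $\omega_i^2 = 0$ (or by invoking a Krull--Schmidt-type uniqueness result for graded connected algebras) should pin down a single such partition. The resulting subsets $V_i \subseteq [m]$ then partition $[m]$, and Hochster's formula matches multigraded pieces to identify $A_i$ with $\kappa(k[K|_{V_i}])$. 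The squarefree multiplication structure finally forces every minimal non-face of $K$ to lie within some single $V_i$, which is precisely the condition $K = K|_{V_1} * \cdots * K|_{V_\ell}$, completing the proof.
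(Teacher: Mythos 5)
Your plan follows the same overall strategy as the paper: use the $\mathbb{Z}^m$-multigrading via Hochster's formula, locate the socle in the top squarefree multidegree $[m]$, extract from the factorization of the socle generator a partition $[m]=V_1\sqcup\cdots\sqcup V_\ell$, and then invoke the homological degree $1$ generators (missing faces) to show ${\rm MF}(K)$ splits across the $V_i$, giving the join decomposition and hence the equivariant product decomposition of $\mathcal{Z}_K$. The final step and the overall reduction are exactly right, and you are also right that the crux lies in making the tensor factorization ``see'' the multigrading.

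However, the mechanism you suggest for closing that crux is where the plan goes astray, and this is precisely where the paper's one genuine trick lives. You want to modify the tensorands so that the socle generators $\omega_i$ themselves become multi-homogeneous, and you suggest the nilpotence $\omega_i^2=0$ or a Krull--Schmidt argument might pin down a unique partition. Neither is promising: $\omega_i^2=0$ holds automatically by degree reasons and gives no extra information, and Krull--Schmidt for connected graded algebras is not readily available and is not what is needed anyway. The paper instead picks \emph{any} partition $(U,V)$ with $\varphi(\tau_1)_U\cdot\varphi(\tau_2)_V\neq 0$ and uses the multigraded projections $\mathrm{proj}_{\subseteq U}$, $\mathrm{proj}_{\subseteq V}$---which are ring homomorphisms, a nonobvious consequence of Hochster squarefreeness (Remark~\ref{rem_sqfree})---to produce multigraded subalgebras $\widetilde{A}_i=\mathrm{proj}(\varphi(A_i))$. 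The point is then an injectivity argument: since $A_1$ is Poincar\'e duality, every nonzero $a\in\varphi(A_1)$ divides $\varphi(\tau_1)$, so $\mathrm{proj}_{\subseteq U}(a)\cdot\mathrm{proj}_{\subseteq U}(b)=\mathrm{proj}_{\subseteq U}(\varphi(\tau_1))\neq 0$ forces $\mathrm{proj}_{\subseteq U}(a)\neq 0$. Thus $\varphi(A_i)\cong\widetilde{A}_i$ without ever choosing a new isomorphism $\varphi$ or making $\varphi(\tau_i)$ multi-homogeneous, and a socle/dimension count shows $\widetilde{A}_1\otimes\widetilde{A}_2\cong H^\ast(\mathcal{Z}_K;k)$. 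Your plan does not contain this step, and the alternatives you float would not readily replace it.

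A second, minor gap: you state the socle is concentrated in multidegree $(1,\ldots,1)$ because $K$ is Gorenstein, but this needs $K$ Gorenstein$^*$ (not a cone); if $K=K'\ast\Delta^0$ then $\operatorname{Tor}^S_\ast(k[K],k)$ has nothing in multidegrees containing the cone vertex. The paper handles this by first splitting off cone factors (which contribute $D^2$ factors to $\mathcal{Z}_K$) before applying the main argument.
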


The remainder of the paper is organized as follows. In Section~\ref{prelim_sec} we review the relevant background on simplicial complexes, Stanley--Reisner rings and moment-angle complexes and collect some facts concerning their cohomology rings. In Section~\ref{rigidity_sec} we define $B$-rigidity and prove Theorems~\ref{thmA} and \ref{thmB}. In Section~\ref{quasitoric_sec} we consider some consequences of the results above for quasitoric manifolds. The cohomology rings of these spaces are given by artinian reductions of Stanley--Reisner rings, and the main results of this section (Theorem~\ref{quasitoric_thm1} and Theorem~\ref{quasitoric_thm2}) address a variant of Question~\ref{B_question} that asks to what extent the orbit polytope of a quasitoric manifold $M$ is determined by $H^\ast(M)$.

\section{Preliminaries} \label{prelim_sec}

Let $k$ be a field. All graded algebras considered in this paper will be connected (that is, nonnegatively graded with $A^0=k$) and of finite type. The term \emph{commutative graded algebra} will refer to a graded algebra that is commutative in the graded sense. All tensor products are taken over $k$.

\subsection{Simplicial complexes and Stanley--Reisner rings}\label{simp_comp_sect}
Let $K$ be an abstract simplicial complex on the vertex set $[m]=\{1,\ldots,m\}$. We will always assume that $\varnothing\in K$ and that $\{i\}\in K$ for all $i\in [m]$. 

For a subset $I\subseteq [m]$, the \emph{full subcomplex} of $K$ on $I$ is defined by
\[
K_I = \{\sigma\in K \,:\, \sigma\subseteq I\}.
\]

A \emph{missing face} of $K$ is a minimal non-face, that is, a subset $I\subseteq [m]$ with $I\notin K$ and $J\in K$ for every proper subset $J\subset I$ (equivalently, $K_I=\partial\Delta^{|I|-1}$). We will write ${\rm MF}(K)$ for the set of missing faces of $K$.

The \emph{join} of two simplicial complexes $K_1$ and $K_2$ on disjoint vertex sets is defined to be the simplicial complex
\[
K_1 \ast K_2 = \{\sigma\cup\tau \,:\, \sigma\in K_1,\, \tau\in K_2\}. 
\]
In particular, $K\ast \Delta^0$ is called the \emph{cone} over $K$. Note that a simplicial complex on a given vertex set $[m]$ is uniquely determined by its set of missing faces and that $K$ decomposes as $K=K_I\ast K_J$ if and only if the vertex set admits a partition $[m]=I\sqcup J$ with the property that ${\rm MF}(K)={\rm MF}(K_I) \sqcup {\rm MF}(K_J)$.

The polynomial algebra $S=k[v_1,\ldots,v_m]$, having variables in bijection with the vertices of $K$, is naturally graded by $\mathbb{Z}^m$. We refer to this grading as the \emph{multidegree} and use the notation $\operatorname{mdeg}(v_1^{e_1}\!\cdots v_m^{e_m})=(e_1,\ldots,e_m)$.

The \emph{Stanley--Reisner ring} of $K$ is the multigraded algebra
\[
k[K]=S/\big(v_{i_1}\!\cdots v_{i_t} \,:\, \{i_1,\ldots,i_t\} \in {\rm MF
}(K) \big).
\]
The Koszul complex of the Stanley--Reisner ring
is the differential graded algebra
\begin{equation} \label{Koszul_comp}
\operatorname{Kos}^{{k[K]}}(v_1,\ldots,v_m) = \big( k[K]\otimes \Lambda(u_1,\ldots,u_m),\ d\big), \quad d(u_i)=v_i,
\end{equation}
with its usual exterior algebra product and each $u_i$ in homological degree $1$. 
The Tor-algebra
\[
\operatorname{Tor}^S_*(k[K],k) ={ H}_*\big(\operatorname{Kos}^{{k[K]}}(v_1,\ldots,v_m)\big)
\]
inherits its product from the Koszul complex. It is naturally graded by $\mathbb{Z}\times \mathbb{Z}^m$; we write $\operatorname{Tor}^S_i(k[K],k)_J$ for the subspace of homological degree $i$ and multidegree $J$.

A multidegree $J\in \mathbb{Z}^m$ is said to be \emph{square-free} if each of its entries is $0$ or $1$. The set of square-free multidegrees can be identified with the set of subsets of $[m]$, and we make this identification in the next result, which expresses $\operatorname{Tor}^S_*(k[K],k)$ in terms of the reduced simplicial cohomology groups of full subcomplexes of $K$.

\begin{theorem}[Hochster's formula \cite{Hochster}] \label{Hoch_thm} 
If $J$ is square-free, then
\[
\operatorname{Tor}^S_i(k[K],k)_J\cong \widetilde{H}^{|J|-i-1}(K_J;k),
\]
with the convention that $\widetilde{H}^{-1}(K_\varnothing;k)=k$. Otherwise $\operatorname{Tor}^S_i(k[K],k)_J=0$.
\end{theorem}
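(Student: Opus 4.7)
The plan is to pass to the multigraded Koszul complex and analyze each multidegree component separately. Since $S$, $k[K]$, and the Koszul complex $\operatorname{Kos}^{k[K]}(v_1,\ldots,v_m) = k[K]\otimes \Lambda(u_1,\ldots,u_m)$ are all $\mathbb{Z}^m$-graded with $v_i$ and $u_i$ both in multidegree $e_i$, and the differential preserves this grading, the complex splits as a direct sum over $J \in \mathbb{Z}^m$ and the homology decomposes accordingly. A $k$-basis for the piece in multidegree $J = (j_1,\ldots,j_m)$ is indexed by pairs $(a,\sigma)$ with $a \in \mathbb{Z}_{\geq 0}^m$, $\sigma \subseteq [m]$, $a + \mathbf{1}_\sigma = J$, and $v^a \neq 0$ in $k[K]$. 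In particular the piece vanishes unless $J \geq 0$ componentwise, and for $J = 0$ it is $k$ concentrated in homological degree $0$, matching the convention $\widetilde H^{-1}(K_{\varnothing};k)=k$.

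For a square-free multidegree, identify $J$ with a subset $I \subseteq [m]$. Then $\sigma \subseteq I$ and $a = I\setminus \sigma$, and $v^{I\setminus\sigma}$ is nonzero in $k[K]$ precisely when $\tau := I\setminus\sigma$ is a face of $K$, equivalently $\tau \in K_I$. In homological degree $i = |\sigma|$, the basis is therefore indexed by faces $\tau\in K_I$ of dimension $|I|-i-1$. A direct check shows the Koszul differential, which replaces $u_j$ by $v_j$, corresponds under $\sigma \leftrightarrow \tau$ to the simplicial coboundary adjoining a vertex to $\tau$; thus the multidegree-$I$ piece is isomorphic to the reduced cochain complex $\widetilde C^{|I|-\ast-1}(K_I;k)$, yielding the claimed identification of $\operatorname{Tor}^S_i(k[K],k)_I$ with $\widetilde H^{|I|-i-1}(K_I;k)$.

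For the non-square-free case, suppose some $j_p\geq 2$. Since $a_p \in \{j_p,j_p-1\}$, every basis element satisfies $a_p \geq 1$. The plan is to exhibit a contracting homotopy in this multidegree: define $h(v^a u_\sigma)$ to be $\pm v^{a-e_p}u_{\sigma\cup\{p\}}$ when $p\notin\sigma$ and $0$ otherwise, with signs chosen so that $dh+hd = \operatorname{id}$. The output lies in the same multidegree (since $(a-e_p)+\mathbf{1}_{\sigma\cup\{p\}} = a+\mathbf{1}_\sigma = J$), and $v^{a-e_p}$ remains nonzero in $k[K]$ because any divisor of a nonzero monomial in a squarefree monomial quotient is nonzero; so $h$ is well-defined. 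Acyclicity of the piece then follows, giving $\operatorname{Tor}^S_i(k[K],k)_J = 0$. The main technical nuisance is sign bookkeeping, both in matching the Koszul differential to the simplicial coboundary in paragraph two and in verifying $dh+hd=\operatorname{id}$ here; these are routine but require careful orientation conventions on the exterior algebra.
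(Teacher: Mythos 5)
The paper states Hochster's formula as a cited result and does not prove it, so there is no argument in the text to compare against; what you have written is the standard Koszul-complex proof, and it is correct. The $\mathbb{Z}^m$-graded splitting of $\operatorname{Kos}^{k[K]}(v_1,\ldots,v_m)$ and the basis description by pairs $(a,\sigma)$ with $a+\mathbf{1}_\sigma=J$ and $v^a\neq 0$ in $k[K]$ are right, and multidegrees with a negative entry indeed contribute nothing since $a$ and $\mathbf{1}_\sigma$ are nonnegative. For square-free $J=I$, the substitution $\tau=I\setminus\sigma$ identifies the multidegree-$I$ strand with the reduced cochain complex of $K_I$, reindexed so that Koszul homological degree $i$ corresponds to cochain degree $|I|-i-1$; the Koszul differential $d(v^\tau u_\sigma)=\sum_{q\in\sigma}\pm v^{\tau\cup q}u_{\sigma\setminus q}$, restricted to those $q$ with $\tau\cup q\in K_I$ (the other terms vanish in $k[K]$), is the simplicial coboundary. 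In a non-square-free multidegree with some $j_p\geq 2$, every basis monomial has $a_p\geq 1$, and the homotopy $h(v^a u_\sigma)=v^{a-e_p}u_p\wedge u_\sigma$ (zero when $p\in\sigma$) is well-defined because a divisor of a nonzero monomial in $k[K]$ is again nonzero; splitting on whether $p\in\sigma$ one checks $dh+hd=\operatorname{id}$ directly, so the strand is acyclic. The sign bookkeeping you flag is genuinely routine and closes without incident.
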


\begin{remark}\label{rem_sqfree}
Hochster's formula allows us to identify multidegrees of $\operatorname{Tor}^S_\ast(k[K],k)$ with subsets of $[m]$, and we will do this henceforth. It follows as well that for any subset $J\subseteq [m]$ the graded subspace \[
\operatorname{Tor}^S_\ast(k[K],k)_{\subseteq J}= \bigoplus_{I\subseteq J}\operatorname{Tor}^S_\ast(k[K],k)_I
\]
is a subalgebra of $\operatorname{Tor}^S_\ast(k[K],k)$, and moreover that the natural projection
\[
\mathrm{proj}_{\subseteq J}\colon \operatorname{Tor}^S_\ast(k[K],k) \longrightarrow \operatorname{Tor}^S_\ast(k[K],k)_{\subseteq J}
\]
is an algebra homomorphism. 
\end{remark}

\begin{lemma} \label{MF_lem}
In homological degree $1$, a $k$-basis for $\operatorname{Tor}_{1}^S(k[K],k)$ is given by 
\[
\left\{ [v_{i_1}\!\cdots v_{i_t}u_{i_{t+1}}]  \,:\, \{i_1,\ldots,i_{t+1}\} \in {\rm MF
}(K) \right\}.
\] 
\end{lemma}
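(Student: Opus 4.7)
The plan is to combine Hochster's formula with the minimal generation of the Stanley--Reisner ideal. The ideal $I_K=\ker(S\to k[K])$ is generated by the monomials $\{v_\sigma : \sigma\in{\rm MF}(K)\}$, where $v_\sigma=v_{i_1}\cdots v_{i_t}$ for $\sigma=\{i_1,\ldots,i_t\}$. Because no missing face properly contains another, no $v_\sigma$ divides any other $v_\tau$, so this is a minimal monomial generating set. By Nakayama we obtain $\dim_k\operatorname{Tor}_1^S(k[K],k)=|{\rm MF}(K)|$, and it suffices to exhibit $|{\rm MF}(K)|$ nonzero classes of the stated form lying in pairwise distinct multidegrees.

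For each missing face $\sigma=\{i_1,\ldots,i_{t+1}\}\in{\rm MF}(K)$, set $z_\sigma=v_{i_1}\cdots v_{i_t}u_{i_{t+1}}$. This chain lies in homological degree $1$ and multidegree $\sigma$, and it is a cycle since $d(z_\sigma)=v_\sigma$ vanishes in $k[K]$. The classes $[z_\sigma]$ sit in pairwise distinct multidegrees (one per missing face), so linear independence will follow automatically once each is shown to be nonzero.

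To verify nonvanishing, I would apply Hochster's formula (Theorem~\ref{Hoch_thm}) in multidegree $\sigma$: since $\sigma$ is a missing face, $K_\sigma=\partial\Delta^{t}$ is a $(t-1)$-sphere, giving $\operatorname{Tor}_1^S(k[K],k)_\sigma\cong\widetilde{H}^{t-1}(K_\sigma;k)\cong k$. These one-dimensional contributions already sum to the total dimension $|{\rm MF}(K)|$, so it only remains to rule out that $z_\sigma$ itself is a boundary. A short direct computation in the Koszul complex settles this: the homological degree $2$ piece in multidegree $\sigma$ is spanned by the monomials $v_{\sigma\setminus\{i,j\}}u_iu_j$, which have differential $v_{\sigma\setminus j}u_j-v_{\sigma\setminus i}u_i$. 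In the cycle subspace with basis $\{v_{\sigma\setminus\ell}u_\ell:\ell\in\sigma\}$, the boundaries therefore cut out the hyperplane $\sum_\ell c_\ell=0$, and $z_\sigma$, being a standard basis vector, does not lie on it.

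The only substantive step is this last boundary computation, whose sole purpose is to identify which element of the one-dimensional space $\operatorname{Tor}_1^S(k[K],k)_\sigma$ is represented by $z_\sigma$; distinctness of multidegrees and the dimension count $|{\rm MF}(K)|$ then immediately deliver the basis claim.
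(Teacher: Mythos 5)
Your proof is correct and takes essentially the same route as the paper: count $\dim_k\operatorname{Tor}_1$ via minimal generation of the Stanley--Reisner ideal, observe the proposed classes lie in distinct multidegrees, and verify nonvanishing by a direct Koszul computation in multidegree $\sigma$. The paper leaves that last computation implicit; you spell it out (and also cross-check it against Hochster's formula, which is a nice redundancy but not logically needed once the boundary calculation is in hand).
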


\begin{proof}
A direct computation with the Koszul complex shows that $v_{i_1}\!\cdots v_{i_t}u_{i_{t+1}}$ is closed and not exact when $\{i_1,\ldots,i_{t+1}\} \in {\rm MF}(K)$. The resulting nonzero classes $[v_{i_1}\!\cdots v_{i_t}u_{i_{t+1}}]$ are linearly independent since they lie in distinct multidegrees, and therefore form a basis since $\dim_k\operatorname{Tor}_{1}^S(k[K],k)=|{\rm MF}(K)|$, the minimal number of  generators of the Stanley--Reisner ideal.
\end{proof}

\subsection{Moment-angle complexes and their cohomology rings}
We now recall some notions from toric topology, largely in order to lay out our notation. Fix as before a simplicial complex $K$ on a vertex set  $[m]$. Identify $D^2$ and $S^1$ with the unit disk in $\mathbb{C}$ and its boundary, respectively. 

The \emph{moment-angle complex} over $K$ is the subspace of the polydisk $(D^2)^m$ defined by
\[
\mathcal{Z}_K = \bigcup_{\sigma\in K} (D^2,S^1)^\sigma \subseteq (D^2)^m
\]
where
\[
(D^2,S^1)^\sigma=\left\{ (z_1,\ldots,z_m)\in (D^2)^m \,:\, z_i\in S^1\text{ if }i\notin \sigma\right\}.
\]
The coordinatewise action of the $m$-torus  $T^m=(S^1)^m$ on $(D^2)^m$ restricts to an action of $T^m$ on $\mathcal{Z}_K$. The equivariant cohomology ring $H^\ast_{T^m}(\mathcal{Z}_K;k)$ is naturally a graded module over $H^\ast(BT^m;k)\cong S$ (with polynomial generators $v_1,\ldots,v_m$ in cohomological degree $2$), and there is an isomorphism of graded $S$-modules 
\[
H^\ast_{T^m}(\mathcal{Z}_K;k) \cong k[K],
\]
see for example \cite[Corollary~4.3.3]{BP}.

For the ordinary cohomology, the cellular cochain complex of $\mathcal{Z}_K$ can be shown to be quasi-isomorphic to the Koszul complex \eqref{Koszul_comp}, which leads to the following fundamental result.

\begin{theorem}[{\cite[Theorem~4.5.4]{BP}}] \label{t_H_Tor}
There is an isomorphism of graded algebras
\[
H^\ast(\mathcal{Z}_K;k) \cong \operatorname{Tor}^S_\ast(k[K],k)
\]
where
\[
H^n(\mathcal{Z}_K;k) \cong \bigoplus_{n=2|J|-i} \operatorname{Tor}^S_i(k[K],k)_J.
\]
\end{theorem}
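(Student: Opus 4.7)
The plan is to compute $H^\ast(\mathcal{Z}_K; k)$ via a cellular cochain complex built from the natural CW structure that $\mathcal{Z}_K$ inherits from the product $(D^2)^m$. Each factor $D^2$ has three cells (a $0$-cell, a $1$-cell on $S^1$, and a $2$-cell), so product cells of $(D^2)^m$ are indexed by pairs of disjoint subsets $(I,J) \subseteq [m]$, where $I$ records the $2$-cell coordinates and $J$ the $1$-cell coordinates. Unwinding the definition of $\mathcal{Z}_K$ shows that such a cell lies in $\mathcal{Z}_K$ if and only if $I \in K$, and its dimension is $2|I| + |J|$. The $T^m$-action preserves this cell decomposition.

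Next, I would identify the cellular cochain DGA with the ``square-free'' sub-DGA $R^\ast(K) \subseteq \operatorname{Kos}^{k[K]}(v_1,\ldots,v_m)$ spanned by monomials $v_I u_J$ with $I \cap J = \varnothing$ and $I \in K$. Taking $v_i$ dual to the $2$-cell and $u_i$ dual to the $1$-cell of the $i$-th factor, the cellular coboundary reproduces $d(u_i) = v_i$ and $d(v_i)=0$, and the Stanley--Reisner relations appear because cells with $I \notin K$ are missing. The cellular cup product on $(D^2)^m$, which is the tensor product of the cup products on each factor, matches the Koszul multiplication on $R^\ast(K)$ after restriction.

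The last step is to check that the inclusion $R^\ast(K) \hookrightarrow \operatorname{Kos}^{k[K]}(v_1,\ldots,v_m)$ is a quasi-isomorphism of DGAs. The two complexes agree strand-for-strand in every square-free multidegree, so it suffices to show that the non-square-free multidegree strands of the Koszul complex are acyclic---this is precisely the vanishing direction of Hochster's formula (Theorem~\ref{Hoch_thm}), which is proved purely algebraically by identifying each multidegree strand of the Koszul complex with (a shift of) the reduced simplicial cochain complex of a full subcomplex. Combining this with the square-free case of Theorem~\ref{Hoch_thm} and the DGA identification above yields the graded algebra isomorphism $H^\ast(\mathcal{Z}_K;k) \cong \operatorname{Tor}^S_\ast(k[K],k)$. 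The bidegree formula $n = 2|J|-i$ then follows directly from the cell dimensions: a cell $(I,J)$ of dimension $2|I|+|J|$ contributes to multidegree $L = I\cup J$ in homological degree $i = |J|$, so its cohomological degree equals $2|L|-i$.

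The main obstacle is the multiplicative half of the second step: verifying that the cellular cup product agrees \emph{on the nose} with the commutative graded product on $R^\ast(K)$, and not merely up to chain homotopy. This reduces to a careful orientation and sign computation, leveraging the product structure of the cells on $(D^2)^m$ and the fact that the Alexander--Whitney cup product on a product CW complex factors as a tensor product of cup products on each factor. The argument is routine but technical; everything else follows cleanly from Hochster's formula and the combinatorics of the cell structure.
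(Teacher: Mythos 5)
The paper does not prove this theorem; it cites \cite[Theorem~4.5.4]{BP} and gives only the one-sentence indication that the cellular cochain complex of $\mathcal{Z}_K$ is quasi-isomorphic to the Koszul complex. Your proposal is a reasonable expansion of that standard approach. Using the vanishing half of Hochster's formula (Theorem~\ref{Hoch_thm}) to conclude that the non-square-free strands of the Koszul complex are acyclic is a valid shortcut, and the bidegree bookkeeping at the end is correct.

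One claim is stated backwards, though it is easily repaired. The square-free span $R^\ast(K)$ is a sub-\emph{complex} of $\operatorname{Kos}^{k[K]}(v_1,\ldots,v_m)$ but not a sub-\emph{algebra}: the Koszul product of $v_Iu_J$ and $v_{I'}u_{J'}$ lands in a non-square-free multidegree whenever $I\cap I'\neq\varnothing$. What is true is that the non-square-free elements form a differential ideal, so $R^\ast(K)$ carries a quotient DGA structure and the \emph{projection} $\operatorname{Kos}^{k[K]}\twoheadrightarrow R^\ast(K)$, not the inclusion, is the morphism of DGAs. Your claim that ``the inclusion $R^\ast(K)\hookrightarrow\operatorname{Kos}^{k[K]}$ is a quasi-isomorphism of DGAs'' should therefore read: the inclusion is a quasi-isomorphism of complexes (by Hochster), hence the projection, which is a one-sided inverse to it, is a quasi-isomorphism of DGAs, and passing to homology gives the ring isomorphism. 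With this correction the argument is sound, and you have correctly identified where the genuine technical burden lies---pinning down the cellular cup product on the nose so that it agrees with the quotient product on $R^\ast(K)$, which is indeed the substantive part of the proof in \cite{BP}.
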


In particular, the cohomology ring of $\mathcal{Z}_K$ obtains a multigrading from the Tor-algebra. Under the isomorphism of Theorem~\ref{t_H_Tor}, the projection operators
\[
\mathrm{proj}_{\subseteq J}\colon \operatorname{Tor}^S_\ast(k[K],k) \longrightarrow \operatorname{Tor}^S_\ast(k[K],k)_{\subseteq J}
\]
of Remark~\ref{rem_sqfree} can be identified with homomorphisms
\[ 
j^\ast\colon H^\ast(\mathcal{Z}_K;k) \longrightarrow H^\ast(\mathcal{Z}_{K_J};k)
\]
induced by the natural maps $j\colon\mathcal{Z}_{K_J} \to \mathcal{Z}_K$ coming from inclusions of full subcomplexes $K_J \to K$. 

\begin{definition}
A commutative graded $k$-algebra $A$ is a \emph{Poincar\'e duality algebra} of dimension $n$ if it is finite dimensional, and if the bilinear forms \[ A^i\otimes A^{n-i} \to k, \quad a\otimes b \mapsto \epsilon(ab) \]
are nondegenerate for some homogeneous map $\epsilon\colon A \to k$ of degree $-n$.
\end{definition}

\begin{remark}
A finite dimensional, connected, graded algebra $A$ is Poincar\'e duality of dimension $n$ if and only if the socle $\mathrm{soc}(A)=\mathrm{ann}_A(A_{\geqslant 1})$ is one dimensional and concentrated in degree $n$. In this case, the orientation map $\epsilon\colon A\to k$ corresponds to a choice of isomorphism $\mathrm{soc}(A)\to k$.
\end{remark}

\begin{lemma} \label{PD_factors}
Let $A$ be a Poincar\'e duality algebra of dimension $n$. If there is an isomorphism $A \cong A_1\otimes A_2$ of graded algebras, then $A_1$ and $A_2$ are Poincar\'e duality algebras of dimension $n_1$ and $n_2$, respectively, with $n_1+n_2=n$.
\end{lemma}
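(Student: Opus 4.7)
The plan is to reduce everything to the socle characterization of Poincaré duality given in the preceding remark: $A$ is PD of dimension $n$ precisely when $\mathrm{soc}(A)$ is one-dimensional and concentrated in degree $n$. So the task becomes showing that under a tensor product decomposition the socle also factors, and degrees add.

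First I would observe that both $A_1$ and $A_2$ are automatically finite dimensional connected graded algebras: since they are connected, $\dim A_1 \cdot \dim A_2 = \dim(A_1 \otimes A_2) = \dim A < \infty$. This makes the socle definition available to each factor.

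The key step is the identity
\[
\mathrm{soc}(A_1 \otimes A_2) = \mathrm{soc}(A_1) \otimes \mathrm{soc}(A_2).
\]
The inclusion $\supseteq$ is immediate, since the maximal ideal of $A_1 \otimes A_2$ is generated by $(A_1)_{\geqslant 1} \otimes 1$ together with $1 \otimes (A_2)_{\geqslant 1}$. For the reverse inclusion, I would take $x \in \mathrm{soc}(A_1 \otimes A_2)$ and write $x = \sum_{i} a_i \otimes b_i$ with the $b_i$ linearly independent in $A_2$. For any $a \in (A_1)_{\geqslant 1}$, the relation $(a \otimes 1)x = \sum_i (aa_i)\otimes b_i = 0$ together with linear independence of the $b_i$ forces $a a_i = 0$ for every $i$; hence $a_i \in \mathrm{soc}(A_1)$ and $x \in \mathrm{soc}(A_1)\otimes A_2$. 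Symmetrically $x \in A_1 \otimes \mathrm{soc}(A_2)$, and since we are over a field these two subspaces intersect in $\mathrm{soc}(A_1)\otimes \mathrm{soc}(A_2)$.

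Now I would conclude as follows. From $A \cong A_1 \otimes A_2$ and the socle identity, one has
\[
1 = \dim_k \mathrm{soc}(A) = \dim_k \mathrm{soc}(A_1) \cdot \dim_k \mathrm{soc}(A_2),
\]
so each $\mathrm{soc}(A_i)$ is one-dimensional; being one-dimensional and graded, it is concentrated in a single degree $n_i$. The tensor product isomorphism is graded, so $\mathrm{soc}(A_1)\otimes \mathrm{soc}(A_2)$ sits in degree $n_1 + n_2$ on one side and in degree $n$ on the other, giving $n_1 + n_2 = n$. By the socle criterion, each $A_i$ is a Poincaré duality algebra of dimension $n_i$. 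The only delicate point is the linear-independence argument in the socle identity, but this is a standard finite-dimensional manipulation; no serious obstacle is expected.
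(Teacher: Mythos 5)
Your proof is correct and follows essentially the same approach as the paper, which reduces the statement to the socle characterization of Poincar\'e duality and the identity $\mathrm{soc}(A_1\otimes A_2)\cong \mathrm{soc}(A_1)\otimes\mathrm{soc}(A_2)$. The only difference is that the paper states this socle identity without proof, whereas you supply the standard linear-independence argument establishing it.
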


\begin{proof}
Since $A \cong A_1\otimes A_2$ implies $\mathrm{soc}(A)\cong \mathrm{soc}(A_1)\otimes \mathrm{soc}(A_2)$, the socle of $A$ is one dimensional if and only if so are the socles of  $A_1$ and $A_2$.
\end{proof}

A classical result of Avramov and Golod \cite{AG} characterizes Poincar\'e duality for the Koszul homology of a local ring in terms of the Gorenstein property. The simplicial complex $K$ is called \emph{Gorenstein over} $k$ if the Stanley--Reisner ring $k[K]$ is a Gorenstein ring, and $K$ is simply called \emph{Gorenstein} if it is Gorenstein over every field $k$. Moreover, $K$ is called \emph{Gorenstein}${}^*$ if it is Gorenstein and $K$ is not a cone (i.e. $K=\mathrm{core}(K)$).

Combined with Theorem~\ref{t_H_Tor}, a graded version of Avramov--Golod's theorem (see \cite[Theorem~3.4.5]{BH}) leads to the following characterization of Poincar\'e duality for moment-angle complexes.

\begin{theorem}[{\cite[Theorem~4.6.8]{BP}}] \label{PD_thm}
The cohomology ring $H^\ast(\mathcal{Z}_K;k)$ is a Poincar\'e duality algebra if and only if $K$ is Gorenstein over $k$.
\end{theorem}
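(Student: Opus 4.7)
The plan is to reduce the topological Poincar\'e duality question to a purely algebraic one and then invoke a theorem of Avramov and Golod. By Theorem~\ref{t_H_Tor}, the graded algebra $H^\ast(\mathcal{Z}_K;k)$ is isomorphic to $\operatorname{Tor}^S_\ast(k[K],k)$, with cohomological degree on the left matching the total grading $n = 2|J|-i$ on the right. It therefore suffices to characterize when the Tor-algebra, equipped with this total grading, is a Poincar\'e duality algebra.

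The next step is to recognize that the Tor-algebra coincides with the Koszul homology algebra of $k[K]$ viewed as a (graded) local ring. Its unique maximal homogeneous ideal is $\mathfrak m = (v_1,\ldots,v_m)$, and the residue classes of $v_1,\ldots,v_m$ form a minimal generating set of $\mathfrak m$. On the other hand, $\operatorname{Tor}^S_\ast(k[K],k)$ is, by construction, the homology of the Koszul complex $\operatorname{Kos}^{k[K]}(v_1,\ldots,v_m)$ in \eqref{Koszul_comp}. So after this identification, the Tor-algebra is literally the Koszul homology algebra to which the Avramov--Golod theorem applies.

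The final step is to invoke the graded version of the Avramov--Golod theorem cited in [BH, Theorem 3.4.5]: the Koszul homology of a graded local ring $R$ is a Poincar\'e duality algebra if and only if $R$ is Gorenstein. Applied to $R = k[K]$, this gives the desired equivalence, because by definition $K$ is Gorenstein over $k$ precisely when $k[K]$ is a Gorenstein ring.

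The substantive content of the proof lies entirely in the Avramov--Golod theorem, which is therefore the main obstacle. The direction ``Gorenstein $\Rightarrow$ Poincar\'e duality on Koszul homology'' can be derived from the self-duality of the minimal free resolution of $k$ over a Gorenstein local ring, which descends to a duality on the Tor-algebra after tensoring with $k$. The converse is deeper: one recovers Gorensteinness from the fact that a Poincar\'e duality structure forces the socle of the top Tor-module to be one-dimensional, which translates to the type of $R$ being equal to $1$. Granting these classical results, the only verification that needs checking for our setting is that the Koszul complex displayed in \eqref{Koszul_comp} really is the Koszul complex of $k[K]$ as a graded local ring, and that is immediate from the definitions.
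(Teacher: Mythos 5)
Your proposal matches the paper's (implicit) argument exactly: the paper cites this as \cite[Theorem~4.6.8]{BP} and explains it as the combination of the identification $H^\ast(\mathcal{Z}_K;k)\cong\operatorname{Tor}^S_\ast(k[K],k)$ from Theorem~\ref{t_H_Tor} with the graded Avramov--Golod theorem \cite[Theorem~3.4.5]{BH}, which is precisely your reduction. Your additional observation that the Koszul complex in \eqref{Koszul_comp} is the Koszul complex of $k[K]$ on a minimal generating set of its maximal homogeneous ideal is the correct bridge making Avramov--Golod applicable.
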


\begin{remark}
The topology behind this characterization is clarified by a result of Cai which states that $\mathcal{Z}_K$ is a closed topological $(m+n)$-manifold precisely when $K$ is a generalized homology $(n-1)$-sphere \cite[Corollary~2.10]{Cai}, which in turn is equivalent to the condition that $K$ is Gorenstein${}^*$ by a well-known result of Stanley \cite{S}.
\end{remark}

Gorenstein${}^*$ complexes of particular importance in toric topology arise from simple polytopes as follows.
Let~$P$ be a simple (convex) polytope of dimension $n$, and denote the facets (faces of dimension $n-1$) of $P$ by $F_1,\ldots,F_m$. The \emph{simplicial complex $K$ dual to} $P$ has vertex set $[m]=\{1,\ldots,m\}$ and $J\subseteq [m]$ belongs to $K$ if and only if $\bigcap_{i\in J}F_i\neq \varnothing$. Equivalently, $K$ is the boundary of the polytope dual to $P$, i.e.\ $K=\partial P^*$.

If $K'$ is the simplicial complex dual to another simple polytope $P'$, then it is easy to see that the simplicial complex dual to the product $P\times P'$ is the join $K\ast K'$.

Through this construction we associate to $P$ the Stanley--Reisner ring and moment-angle manifold 
\[
k[P]=k[K] \quad\textrm{ and }\quad \mathcal{Z}_P=\mathcal{Z}_K.
\]
In particular, if $k[P]$ is generated by $v_1,\ldots,v_m$ as in Section~\ref{simp_comp_sect}, then by Theorem~\ref{t_H_Tor}
\[
H^*(\mathcal{Z}_P;k)\cong H^*\big(\operatorname{Kos}^{{k[P]}}(v_1,\ldots,v_m)\big).
\]
\noindent Moment-angle manifolds corresponding to simple polytopes can be given smooth structures (for which the $T^m$-action is smooth) by identifying $\mathcal{Z}_P$ with a nonsingular intersection of quadrics in $\mathbb{C}^m$ (see \cite[Section~6.1]{BP} for details).

\section{$B$-rigidity}\label{rigidity_sec}

In this section we prove Theorem~\ref{thmB} (Theorem~\ref{main_decomp_th} below) and use it to derive some corollaries, including Theorem~\ref{thmA} (Corollary~\ref{product_cor} below). We begin with a definition.

A simplicial complex $K$ with $K=\mathrm{core}(K)$ is called \emph{B-rigid over} $k$ if for every $K'$ with $K'=\mathrm{core}(K')$, a graded ring isomorphism $H^\ast(\mathcal{Z}_K;k)\cong H^\ast(\mathcal{Z}_{K'};k)$ implies a combinatorial equivalence $K\simeq K'$. We simply say that $K$ is \emph{B-rigid} if it is $B$-rigid over every field $k$.

\begin{remark}
$B$-rigidity was introduced and defined as above in \cite{CPS} to address Question~\ref{B_question} (cf.\ \cite[Lecture~IV, Problem~7.6]{B}), and has since been studied in \cite{BEMPP,CP19,E,FMW15,FMW20}, where some variations on the definition above have appeared. For example, in \cite{FMW20}, bigraded isomorphisms of cohomology rings are used to define $B$-rigidity while complexes satisfying the definition above are called \emph{strongly B-rigid}. In \cite{FMW15}, a complex is called $B$-rigid when it is $B$-rigid over $\mathbb{Z}$ in the sense above. We note that any complex satisfying the definition above is also $B$-rigid in the bigraded and integral senses.

In \cite{BEMPP,CP19,E}, the notion of $B$-rigidity is defined for simple polytopes rather than simplicial complexes. A simple polytope $P$ is called \emph{B-rigid over} $k$ if for every simple polytope $P'$, a graded ring isomorphism $H^\ast(\mathcal{Z}_P;k)\cong H^\ast(\mathcal{Z}_{P'};k)$ implies a combinatorial equivalence $P\simeq P'$. As before we say that $P$ is \emph{B-rigid} if it is $B$-rigid over every field $k$. Our main results apply in the more general context of simplicial complexes.
\end{remark}

\begin{theorem}\label{main_decomp_th}
Let $K$ be a simplicial complex on vertex set $[m]$ and let $k$ be a field. If $K$ is Gorenstein over $k$ and the cohomology ring of $\mathcal{Z}_K$ admits a tensor product decomposition 
\[
H^\ast(\mathcal{Z}_K;k) \cong A_1\otimes\cdots\otimes A_\ell 
\] 
as graded $k$-algebras, then there is a $T^m$-equivariant homeomorphism \[ \mathcal{Z}_K \cong \mathcal{Z}_{K_1}\times\cdots\times \mathcal{Z}_{K_\ell} \] where $H^\ast(\mathcal{Z}_{K_i};k) \cong A_i$ for each $i=1,\ldots,\ell$. 
\end{theorem}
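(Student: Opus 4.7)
The plan is to induct on $\ell$ and reduce to the case $\ell=2$, so assume $H^\ast(\mathcal{Z}_K;k) \cong A_1\otimes A_2$. We may further assume $K=\operatorname{core}(K)$: any cone vertex $v$ contributes a free polynomial generator to $k[K]$ and hence a contractible $D^2$ factor to $\mathcal{Z}_K$, and such vertices can be distributed to either factor at the end without changing the algebra $H^\ast(\mathcal{Z}_K;k)$. Since $K$ is Gorenstein, Theorem~\ref{PD_thm} says $H^\ast(\mathcal{Z}_K;k)$ is a Poincar\'e duality algebra, and Lemma~\ref{PD_factors} propagates this to $A_1$ and $A_2$.

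The main task is to produce a partition $[m]=V_1\sqcup V_2$ for which $K = K_{V_1}\ast K_{V_2}$ and $H^\ast(\mathcal{Z}_{K_{V_i}};k)\cong A_i$. Once such a partition is in hand, the moment-angle functor gives
\[
\mathcal{Z}_K \;=\; \bigcup_{\sigma\in K_{V_1}\ast K_{V_2}} (D^2,S^1)^\sigma \;=\; \mathcal{Z}_{K_{V_1}}\times \mathcal{Z}_{K_{V_2}} \;\subseteq\; (D^2)^{V_1}\times (D^2)^{V_2},
\]
a $T^m = T^{V_1}\times T^{V_2}$-equivariant homeomorphism; the identifications of the cohomology factors with $A_1$ and $A_2$ then follow from Remark~\ref{rem_sqfree} and a dimension count.

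To construct the partition, I would exploit the canonical basis $\{\alpha_F : F\in \operatorname{MF}(K)\}$ of $\operatorname{Tor}^S_1(k[K],k)$ given by Lemma~\ref{MF_lem}. Every such $\alpha_F$ is indecomposable, since any product of two positive-degree classes lands in homological degree $\geqslant 2$. In any tensor decomposition of augmented algebras, the indecomposables split canonically as $\operatorname{Ind}(A_1\otimes A_2)=\operatorname{Ind}(A_1)\oplus \operatorname{Ind}(A_2)$, so each $\alpha_F$ has a well-defined image in this direct sum. Declaring $F$ to be of \emph{type} $i$ when the image lies in $\operatorname{Ind}(A_i)$, and setting $V_i$ to be the union of all missing faces of type $i$, the criterion from Section~\ref{simp_comp_sect} that a simplicial complex splits as a join iff its missing faces partition along the vertex set immediately gives $K=K_{V_1}\ast K_{V_2}$ -- provided that $V_1\cap V_2=\emptyset$, that no $\alpha_F$ has mixed type, and that every vertex appears in some missing face (guaranteed by $K=\operatorname{core}(K)$).

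The technical heart of the argument -- and the step I expect to be the main obstacle -- is converting the tensor decomposition into this clean partition of missing faces. The key algebraic input is the multiplicative vanishing $\alpha_F\cdot\alpha_{F'}=0$ whenever $F\cap F'\neq\emptyset$: the product would lie in multidegree $F\cup F'$, which is not square-free, and hence vanishes by Hochster's formula. Coupled with the Poincar\'e duality of each $A_i$, which ensures nonzero witness pairings in every bidegree of $A_1\otimes A_2$, this vanishing must force any two $\alpha_F,\alpha_{F'}$ with $F\cap F'\neq\emptyset$ into the same tensor factor and must rule out mixed-type indecomposables. This is precisely the step that genuinely invokes the Gorenstein hypothesis, in line with the ``if and only if'' stated in the abstract.
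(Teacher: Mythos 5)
Your overall skeleton matches the paper's: reduce to $\ell=2$, pass to $K=\mathrm{core}(K)$, invoke Poincar\'e duality via Theorem~\ref{PD_thm} and Lemma~\ref{PD_factors}, and aim to show that the missing faces of $K$ partition across two complementary vertex sets so that $K$ splits as a join, with Lemma~\ref{MF_lem} supplying the needed handle on $\operatorname{Tor}_1$. That part is fine, and your observation that $\alpha_F\cdot\alpha_{F'}=0$ for $F\cap F'\neq\varnothing$ (by square-freeness of multidegrees) is a correct and relevant feature.

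However, the step you flag as the ``technical heart'' is not merely unfinished---it is, as stated, false. You want to assign each $\alpha_F$ a well-defined type by its image in $\operatorname{Ind}(A_1)\oplus\operatorname{Ind}(A_2)$ under $\varphi^{-1}$, and you claim Poincar\'e duality plus the vanishing of products rules out mixed type. But a given abstract isomorphism $\varphi\colon A_1\otimes A_2\to H^\ast(\mathcal{Z}_K;k)$ can genuinely send the class of a missing face to a mixed-type element. Concretely, take $K$ to be the join of two $4$-cycles $C_4^{(1)}\ast C_4^{(2)}$, so that $H^\ast(\mathcal{Z}_K;k)\cong\Lambda(x_1,x_2)\otimes\Lambda(y_1,y_2)$ with each generator of homological degree $1$; the automorphism fixing $x_2,y_1,y_2$ and sending $x_1\mapsto x_1+y_1$ is a graded algebra isomorphism, and under the composite $\varphi$ the basis class $\alpha_F$ corresponding to $x_1$ pulls back to $x_1-y_1$, which has nonzero components in both $\operatorname{Ind}(A_1)$ and $\operatorname{Ind}(A_2)$. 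No amount of Poincar\'e duality forbids this, and your proposed vanishing argument gives no contradiction here. So the partition $V_1\sqcup V_2$ cannot be read off directly from $\varphi$ in the way you describe.

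The paper resolves exactly this difficulty by \emph{correcting} the decomposition using the multigrading rather than trusting the given $\varphi(A_i)$. It expands the socle class $\varphi(\tau_1)\cdot\varphi(\tau_2)$ in multidegrees, uses the Gorenstein${}^*$ property to see that the top class sits in multidegree $[m]$, and selects disjoint $U,V$ with $U\cup V=[m]$ so that $\varphi(\tau_1)_U\cdot\varphi(\tau_2)_V\neq 0$. It then replaces $\varphi(A_i)$ by the multigraded subalgebras $\widetilde{A}_i=\mathrm{proj}_{\subseteq U}\varphi(A_1)$ and $\mathrm{proj}_{\subseteq V}\varphi(A_2)$ (using Remark~\ref{rem_sqfree}), verifying via Poincar\'e duality that these projections are injective and that multiplication $\widetilde{A}_1\otimes\widetilde{A}_2\to H^\ast(\mathcal{Z}_K;k)$ is an isomorphism. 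Once the decomposition is multigraded, each homogeneous class $\alpha_F$ automatically lies in exactly one factor by its multidegree, and the partition of missing faces (hence $K=K_U\ast K_V$) is immediate. This multidegree projection is the missing idea in your proposal, and without it your argument does not close.
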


\begin{proof}
Note that if $K=K'\ast\Delta^0$, then $\mathcal{Z}_K$ is $T^m$-equivariantly homeomorphic to $\mathcal{Z}_{K'} \times \mathcal{Z}_{\Delta^0}=\mathcal{Z}_{K'} \times (D^2)$ with $T^m=T^{m-1}\times S^1$ acting coordinatewise on the product. We may therefore assume without loss of generality that $K$ is not a cone and hence that $K$ is Gorenstein${}^*$ over $k$.

It will suffice to assume $\ell=2$, so suppose \[ \varphi\colon A_1\otimes A_2 \longrightarrow H^\ast(\mathcal{Z}_K;k) \] is an isomorphism of graded $k$-algebras. Since $K$ is Gorenstein${}^*$ over $k$, the cohomology ring $H^\ast(\mathcal{Z}_K;k)$ is a Poincar\'e duality algebra by Theorem~\ref{PD_thm}, and so are $A_1$ and $A_2$ by Lemma~\ref{PD_factors}. Let $\tau_i \in A_i$ be a generator of the highest degree nonzero graded component of $A_i$, $i=1,2$. Then $\tau \coloneqq \varphi(\tau_1) \cdot \varphi(\tau_2)$ generates the top cohomology group of $\mathcal{Z}_K$, and it follows from the Gorenstein${}^*$ property that $\tau$ is homogeneous of multidegree $\operatorname{mdeg}(\tau)=[m]$. Identifying $H^\ast(\mathcal{Z}_K;k)$ with 
$\bigoplus_{J\subseteq [m]} \mathrm{Tor}_\ast^S(k[K],k)_J$ using Theorem~\ref{t_H_Tor},
write 
\[ 
\varphi(\tau_i) = {\textstyle \sum_{J\subseteq [m]}} \varphi(\tau_i)_J, \quad  \varphi(\tau_i)_J\in \mathrm{Tor}_\ast^S(k[K],k)_J
\]
for $i=1,2$ and choose multidegrees $U,V\subseteq [m]$ such that $\varphi(\tau_1)_U \cdot \varphi(\tau_2)_V = c\tau$ for some nonzero $c\in k$. Note that $U\cap V=\varnothing$ by square-freeness (Theorem~\ref{Hoch_thm}), and $U\cup V=[m]$ since $\operatorname{mdeg}(\tau)=[m]$.

Next, define subalgebras of $H^\ast(\mathcal{Z}_K;k)$ by
\[
\widetilde{A}_1 = \mathrm{proj}_{\subseteq U}(\varphi(A_1))\quad\text{and}\quad
\widetilde{A}_2 = \mathrm{proj}_{\subseteq V}(\varphi(A_2))
\]
using the projections from Remark~\ref{rem_sqfree}. Since $\mathrm{proj}_{\subseteq U} \varphi(\tau_1) \neq 0$, and every nonzero element of $\varphi(A_1)$ divides $\varphi(\tau_1)$ by Poincar\'e duality, it follows that $\mathrm{proj}_{\subseteq U}|_{\varphi(A_1)}$ is injective and therefore defines an isomorphism $\varphi(A_1) \cong \widetilde{A}_1$. Similarly, $\varphi(A_2) \cong \widetilde{A}_2$. Consider the homomorphism of graded algebras $\widetilde{A}_1 \otimes \widetilde{A}_2 \to H^\ast(\mathcal{Z}_K;k)$ defined by multiplication. Since $\widetilde{A}_1 \otimes \widetilde{A}_2$ is clearly a Poincar\'e duality algebra with socle generated by $\varphi(\tau_1)_U \cdot \varphi(\tau_2)_V$, this map is injective and therefore an isomorphism as $\dim_k(\widetilde{A}_1 \otimes \widetilde{A}_2) =\dim_k(A_1 \otimes A_2) =\dim_k H^\ast(\mathcal{Z}_K;k)$.

Finally, in homological degree $1$, observe that the decomposition $H^\ast(\mathcal{Z}_K;k) \cong \widetilde{A}_1 \otimes \widetilde{A}_2$ implies that each indecomposable generator 
\[
[v_{i_1}\!\cdots v_{i_t}u_{i_{t+1}}] \in \mathrm{Tor}_{1}^S(k[K],k) \subseteq H^\ast(\mathcal{Z}_K;k)
\]
of Lemma~\ref{MF_lem} must lie either in $\widetilde{A}_1$ or $\widetilde{A}_2$ (since it is homogeneous of multidegree $\{i_1,\ldots,i_{t+1}\}$ and $\widetilde{A}_1 \subseteq \operatorname{Tor}^S_\ast(k[K],k)_{\subseteq U}$, $\widetilde{A}_2 \subseteq \operatorname{Tor}^S_\ast(k[K],k)_{\subseteq V}$). In particular, each missing face $\sigma=\{i_1,\ldots,i_{t+1}\} \notin K$ satisfies either $\sigma\in U$ or $\sigma\in V$, that is, $\mathrm{MF}(K)=\mathrm{MF}(K_U) \sqcup \mathrm{MF}(K_V)$. It follows that $K=K_U\ast K_V$ and hence $\mathcal{Z}_K$ is equivariantly homeomorphic to $\mathcal{Z}_{K_U} \times \mathcal{Z}_{K_V}$. Note that since $H^\ast(\mathcal{Z}_{K_U};k)\cong \operatorname{Tor}^S_\ast(k[K],k)_{\subseteq U}$ and $H^\ast(\mathcal{Z}_{K_V};k)\cong \operatorname{Tor}^S_\ast(k[K],k)_{\subseteq V}$, both inclusions $\widetilde{A}_1 \subseteq \operatorname{Tor}^S_\ast(k[K],k)_{\subseteq U}$ and $\widetilde{A}_2 \subseteq \operatorname{Tor}^S_\ast(k[K],k)_{\subseteq V}$ must be equalities for dimension reasons. Therefore 
\[
H^\ast(\mathcal{Z}_{K_U};k)\cong \widetilde{A}_1 \cong \varphi(A_1)\cong A_1,
\]
and similarly $H^\ast(\mathcal{Z}_{K_V};k)\cong A_2$, as desired.
\end{proof}

The next corollary follows immediately from the proof above. It is interesting to note that any product decomposition of a moment-angle manifold up to homotopy can be improved to one up to $T^m$-equivariant homeomorphism.

\begin{corollary}\label{tfae_cor}
Let $K$ be a Gorenstein simplicial complex on the vertex set $[m]$. The following conditions are equivalent:
\begin{enumerate}[label={\normalfont(\alph*)}]
    \item $H^\ast(\mathcal{Z}_K)$ decomposes nontrivially as a tensor product;
    \item $\mathcal{Z}_K$ is homotopy equivalent to a product of non-contractible spaces;
    \item $\mathcal{Z}_K$ is $T^m$-equivariantly homeomorphic to a product of non-contractible moment-angle manifolds;
    \item $K=K_1\ast K_2$, where $K_i$ is not a simplex, $i=1,2$.
\end{enumerate}
\end{corollary}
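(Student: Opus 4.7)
The plan is to prove the four equivalences via a cyclic chain (a) $\Rightarrow$ (d) $\Rightarrow$ (c) $\Rightarrow$ (b) $\Rightarrow$ (a). All of the substantive work is already contained in Theorem~\ref{main_decomp_th}; the corollary is essentially a matter of reading off what that proof yields and tying up a few loose ends.

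The key implication (a) $\Rightarrow$ (d) is immediate from Theorem~\ref{main_decomp_th}: reducing to $\ell=2$, a nontrivial decomposition $H^\ast(\mathcal{Z}_K;k) \cong A_1 \otimes A_2$ (meaning $A_i \neq k$ for both $i$) gives by that theorem a join decomposition $K = K_U \ast K_V$ with $H^\ast(\mathcal{Z}_{K_U};k) \cong A_1$ and $H^\ast(\mathcal{Z}_{K_V};k) \cong A_2$. Since $A_i \neq k$, neither $\mathcal{Z}_{K_U}$ nor $\mathcal{Z}_{K_V}$ has trivial cohomology; because the moment-angle complex of a full simplex is a contractible polydisc $(D^2)^n$, this rules out either factor being a simplex.

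For (d) $\Rightarrow$ (c), I would invoke the standard identification $\mathcal{Z}_{K_1 \ast K_2} \cong \mathcal{Z}_{K_1} \times \mathcal{Z}_{K_2}$ as $T^m$-spaces, under the splitting $T^m = T^{|V(K_1)|} \times T^{|V(K_2)|}$. A non-simplex $K_i$ has a missing face, and Lemma~\ref{MF_lem} then produces a nonzero class in $\operatorname{Tor}_1^S(k[K_i],k)$, so $\mathcal{Z}_{K_i}$ is not contractible. The implication (c) $\Rightarrow$ (b) is trivial since an equivariant homeomorphism is a homotopy equivalence and non-contractibility is preserved. Finally, for (b) $\Rightarrow$ (a), I would apply the Künneth theorem to $\mathcal{Z}_K \simeq X_1 \times X_2$ and argue that each $H^\ast(X_i;k)$ is strictly larger than $k$: since $\mathcal{Z}_K$ is simply connected, so are the homotopy factors $X_1, X_2$, and a simply connected non-contractible space has nonzero reduced homology by Hurewicz.

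The only subtlety worth flagging is the last one: translating ``non-contractible'' in (b) into ``nontrivial cohomology factor'' requires the simple-connectedness of $\mathcal{Z}_K$. Apart from this, every step is either Theorem~\ref{main_decomp_th} or a standard fact, so no real obstacle remains.
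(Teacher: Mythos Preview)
Your proposal is correct and aligns with the paper, which offers no proof beyond the remark that the corollary follows immediately from the proof of Theorem~\ref{main_decomp_th}; your cycle (a) $\Rightarrow$ (d) $\Rightarrow$ (c) $\Rightarrow$ (b) $\Rightarrow$ (a) makes this explicit, correctly isolating (a) $\Rightarrow$ (d) as the only substantive step.

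One subtlety in (b) $\Rightarrow$ (a) deserves tightening: Hurewicz yields nonzero reduced \emph{integral} homology for the simply connected factors $X_i$, but over a specific field $k$ a non-contractible simply connected space can still have $H^\ast(X_i;k)=k$ (e.g.\ a mod-$p$ Moore space over $\mathbb{Q}$), so the K\"unneth decomposition need not be nontrivial over $k$ on this reasoning alone. A clean fix uses Poincar\'e duality: since $H^\ast(\mathcal{Z}_K;k)$ is a Poincar\'e duality algebra for every field $k$, Lemma~\ref{PD_factors} forces each $H^\ast(X_i;k)$ to be one as well, of some formal dimension $n_i(k)$ with $n_1(k)+n_2(k)=\dim\mathcal{Z}_K$; a comparison with $k=\mathbb{Q}$ shows the $n_i(k)$ are independent of $k$, and if $n_1(k)=0$ for every $k$ then $\widetilde{H}_\ast(X_1;\mathbb{Z})=0$ and $X_1$ is contractible. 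You should also note, for (d) $\Rightarrow$ (c), that $k[K_1\ast K_2]\cong k[K_1]\otimes_k k[K_2]$ transmits the Gorenstein property to each join factor, so the $\mathcal{Z}_{K_i}$ are genuinely moment-angle \emph{manifolds} as condition~(c) requires.
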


\begin{remark}
Of course, for a moment-angle manifold over a simple polytope $P$, it follows from the results above that a nontrivial tensor product decomposition of $H^\ast(\mathcal{Z}_P;k)$ implies a product decomposition of $P$. In this case the $T^m$-equivariant homeomorphisms of Theorem~\ref{main_decomp_th} and Corollary~\ref{tfae_cor} can be replaced by $T^m$-equivariant diffeomorphisms.
\end{remark}

\begin{corollary}\label{product_cor}
The collection of B-rigid Gorenstein complexes is closed under finite joins. Consequently, the collection of B-rigid polytopes is closed under finite products.
\end{corollary}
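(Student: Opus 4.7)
Both statements will be proved by induction on the number of join/product factors, so it is enough to treat the case of two factors. Let $L_1$ and $L_2$ be B-rigid Gorenstein complexes with $L_i = \mathrm{core}(L_i)$, neither a simplex, and set $K = L_1 \ast L_2$. My first step is to check that $K$ is itself a valid B-rigidity candidate: the Gorenstein property is preserved under joins since $k[K] \cong k[L_1] \otimes_k k[L_2]$ and the tensor product of Gorenstein $k$-algebras is Gorenstein, while any apex vertex of $K$ lying in $L_i$ would be adjacent to every other vertex of $L_i$, contradicting $L_i = \mathrm{core}(L_i)$; hence $K = \mathrm{core}(K)$.

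Next, fix a field $k$ and suppose $K'$ is a simplicial complex with $K' = \mathrm{core}(K')$ and a graded algebra isomorphism $H^\ast(\mathcal{Z}_{K'};k) \cong H^\ast(\mathcal{Z}_K;k)$. Theorem~\ref{PD_thm} applied in both directions shows that $H^\ast(\mathcal{Z}_{K'};k)$ is a Poincar\'e duality algebra, so $K'$ is Gorenstein over $k$. Transporting the tensor decomposition across the isomorphism gives $H^\ast(\mathcal{Z}_{K'};k) \cong H^\ast(\mathcal{Z}_{L_1};k) \otimes H^\ast(\mathcal{Z}_{L_2};k)$, and Theorem~\ref{main_decomp_th} then produces a join decomposition $K' = K'_1 \ast K'_2$ with $H^\ast(\mathcal{Z}_{K'_i};k) \cong H^\ast(\mathcal{Z}_{L_i};k)$. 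Running the core argument of the previous paragraph in reverse, each $K'_i$ equals its own core (otherwise $K'$ itself would be a cone), so B-rigidity of $L_i$ over $k$ applies and yields $K'_i \simeq L_i$, whence $K' \simeq L_1 \ast L_2 = K$.

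For the polytope statement I would use the classical dictionary between products of simple polytopes and joins of their dual simplicial complexes: a simple polytope $P'$ is combinatorially equivalent to a product $P'_1 \times P'_2$ if and only if $\partial (P')^\ast$ decomposes as a join $K'_1 \ast K'_2$, in which case $\partial (P'_i)^\ast = K'_i$. Given B-rigid polytopes $P_1, P_2$ and a simple polytope $P'$ with $H^\ast(\mathcal{Z}_{P_1 \times P_2};k) \cong H^\ast(\mathcal{Z}_{P'};k)$, the argument above applied to $K = \partial(P_1 \times P_2)^\ast = \partial P_1^\ast \ast \partial P_2^\ast$ yields $\partial (P')^\ast = K'_1 \ast K'_2$, which the dictionary identifies with a decomposition $P' = P'_1 \times P'_2$. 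Polytope B-rigidity of each $P_i$ then forces $P'_i \simeq P_i$, and therefore $P' \simeq P_1 \times P_2$.

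The only step requiring any real care is the bookkeeping around cores, needed both to confirm that the join $K$ is a legitimate B-rigidity candidate and that the factors $K'_i$ returned by Theorem~\ref{main_decomp_th} are themselves legitimate candidates against which B-rigidity of $L_i$ may be invoked. Both facts rest on the elementary observation that a join is a cone if and only if one of its factors is a cone, so in the end there is no serious obstacle: the heavy lifting has already been done entirely by Theorem~\ref{main_decomp_th}.
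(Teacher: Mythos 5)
Your proof is correct and follows essentially the same strategy as the paper's: transport the tensor decomposition $H^\ast(\mathcal{Z}_{L_1};k)\otimes H^\ast(\mathcal{Z}_{L_2};k)$ across the given isomorphism, apply Theorem~\ref{main_decomp_th} to obtain a join decomposition of $K'$ with matching cohomology rings on the factors, and then invoke B-rigidity of $L_1$ and $L_2$. The paper's version is terser, silently assuming the routine facts that joins preserve the Gorenstein and core conditions and that polytope products correspond to joins of dual boundary complexes, all of which you correctly and usefully make explicit.
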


\begin{proof}
Let $K_1$ and $K_2$ be $B$-rigid Gorenstein complexes. To see that $K_1\ast K_2$ is $B$-rigid, assume there is a graded ring isomorphism $H^\ast(\mathcal{Z}_{K_1\ast K_2};k)\cong H^\ast(\mathcal{Z}_L;k)$ for some complex $L$ with $L=\mathrm{core}(L)$. Then since $H^\ast(\mathcal{Z}_{K_1\ast K_2};k)\cong H^\ast(\mathcal{Z}_{K_1};k)\otimes H^\ast(\mathcal{Z}_{K_2};k)$, the proof of Theorem~\ref{main_decomp_th} implies that $L$ decomposes as a join $L_1\ast L_2$ with $H^\ast(\mathcal{Z}_{L_i};k)\cong H^\ast(\mathcal{Z}_{K_i};k)$ for $i=1,2$. It now follows from the $B$-rigidity of $K_i$ that there is a combinatorial equivalence $K_i\simeq L_i$ for $i=1,2$, and hence $L\simeq K_1\ast K_2$. Therefore $K_1\ast K_2$ is $B$-rigid.
\end{proof}

\section{Quasitoric manifolds} \label{quasitoric_sec}

In their foundational work \cite{DJ}, Davis and Januszkiewicz introduced the notion of a quasitoric manifold as a topological generalization of a nonsingular projective toric variety (or toric manifold). In this section we discuss some implications of the results of the previous section for quasitoric manifolds.

Let $P$ be a simple convex polytope of dimension $n$. A \emph{quasitoric manifold} over $P$ is a closed smooth $2n$-dimensional manifold $M$ equipped with a smooth locally standard action of $T^n$ such that the orbit space $M/T^n$ can be identified with $P$. (A \emph{locally standard} $T^n$-action is one which is locally modeled on the standard action of $T^n$ on $\mathbb{C}^n$, and this property implies that the orbit space is a manifold with corners.) 

Every quasitoric manifold over $P$ arises as a quotient $M\cong \mathcal{Z}_P/T^{m-n}$ for some subtorus $T^{m-n}\subseteq T^m$ that acts freely on $\mathcal{Z}_P$ \cite[Proposition~7.3.12]{BP}, 
resulting in a principal fibration sequence 
\[
T^{m-n} \longrightarrow \mathcal{Z}_P \longrightarrow M.
\]

The cohomology of quasitoric manifolds was described in \cite{DJ}. Associated to the $T^n$-action on $M$, there is a regular sequence $t_1,\ldots,t_{n}$ of linear elements in $k[P]$ such that 
\begin{equation} \label{M_eq}
H^\ast(M;k)\cong k[P]/(t_1,\ldots,t_{n}). 
\end{equation}
Since $t_1,\ldots,t_n$ is a regular sequence, it follows from Theorem~\ref{t_H_Tor} that there is an isomorphism (cf.\ \cite[Lemma~A.3.5]{BP})
\begin{equation} \label{tor_eq}
H^\ast(\mathcal{Z}_P;k)\cong \operatorname{Tor}^{S/\mathcal{J}}_\ast(k[P]/\mathcal{J},k), 
\end{equation}
where $\mathcal{J}$ denotes the ideal generated by (homogeneous lifts to $S$ of) $t_1,\ldots,t_n$.

As a consequence of the isomorphisms \eqref{M_eq} and \eqref{tor_eq}, we emphasize that the cohomology ring of any quasitoric manifold over $P$ determines, in particular, the cohomology ring of the moment-angle manifold over $P$.

\begin{lemma}[{\cite[Lemma~3.7]{CPS}}] \label{M_Z_lem}
Let $M$ and $M'$ be quasitoric manifolds over simple polytopes $P$ and $P'$, respectively. If there is a graded ring isomorphism $H^\ast(M;k)\cong H^\ast(M';k)$, then there is a graded ring isomorphism $H^\ast(\mathcal{Z}_P;k)\cong H^\ast(\mathcal{Z}_{P'};k)$.
\end{lemma}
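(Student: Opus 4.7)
The strategy is to show that $H^\ast(\mathcal{Z}_P;k)$ can be recovered functorially from $H^\ast(M;k)$ alone, as a graded $k$-algebra; the lemma then follows immediately by applying this construction to $M$ and $M'$.

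The starting point is the isomorphism \eqref{tor_eq}, which identifies
\[
H^\ast(\mathcal{Z}_P;k) \cong \operatorname{Tor}^{S/\mathcal{J}}_\ast(H^\ast(M;k),\,k),
\]
using \eqref{M_eq} to rewrite $k[P]/\mathcal{J}=H^\ast(M;k)$. The plan is to argue that the pair $(S/\mathcal{J},\,H^\ast(M;k))$ is canonically determined (up to isomorphism of pairs) by $H^\ast(M;k)$ as an abstract graded algebra. For this, set $A=H^\ast(M;k)$. Since $\mathcal{J}$ is generated by $n$ homogeneous lifts of a regular sequence of linear (cohomological degree $2$) elements of $k[P]$, these lifts are $k$-linearly independent in $S_2$ and form a regular sequence in the polynomial ring $S$; hence $S/\mathcal{J}$ is itself a polynomial ring on $m-n$ generators in cohomological degree $2$.

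The key point is that the surjection $S/\mathcal{J}\twoheadrightarrow A$ is an isomorphism in cohomological degree $2$. This is where one uses the standing convention that every singleton $\{i\}$ lies in $K$: the Stanley--Reisner ideal $I_{SR}$ is then generated by squarefree monomials in at least two variables, i.e.\ in cohomological degree $\geq 4$, so $(I_{SR})_2=0$. Hence $(S/\mathcal{J})_2\cong A_2=H^2(M;k)$, and since $S/\mathcal{J}$ is a polynomial ring on its degree-two part, the universal property of symmetric algebras yields a canonical identification
\[
\bigl(S/\mathcal{J}\ \twoheadrightarrow\ A\bigr)\ \cong\ \bigl(\operatorname{Sym}(A_2)\ \twoheadrightarrow\ A\bigr)
\]
extending the inclusion $A_2\hookrightarrow A$. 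Substituting back gives
\[
H^\ast(\mathcal{Z}_P;k) \cong \operatorname{Tor}^{\operatorname{Sym}(A_2)}_\ast(A,\,k),
\]
a construction that depends only on $A$ as a graded algebra.

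For the functoriality step, one can compute the right-hand side via the Koszul DG algebra $A\otimes\Lambda(u_1,\ldots,u_{m-n})$ with $d(u_i)=x_i$ for a basis $x_1,\ldots,x_{m-n}$ of $A_2$. A graded algebra isomorphism $\varphi\colon H^\ast(M;k)\xrightarrow{\sim} H^\ast(M';k)$ restricts to a linear isomorphism of the degree-two parts and hence, by naturality of $\operatorname{Sym}$, extends to an isomorphism of the Koszul DG algebras, producing the desired isomorphism $H^\ast(\mathcal{Z}_P;k)\cong H^\ast(\mathcal{Z}_{P'};k)$. The main (modest) obstacle is the canonical identification $S/\mathcal{J}\cong\operatorname{Sym}(A_2)$ over $A$; once this is in hand, everything else is a formal functoriality argument for $\operatorname{Tor}$ with no additional input.
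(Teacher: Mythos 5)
Your proof is correct and takes essentially the same route as the argument implicit in the paper (and in the cited reference): the key observation is that $H^\ast(\mathcal{Z}_P;k)$ is recovered as the Koszul homology of $H^\ast(M;k)$ with respect to a basis of $H^2(M;k)$, a construction intrinsic to the graded algebra $H^\ast(M;k)$. Indeed, the chain of isomorphisms at the start of the paper's proof of Theorem~\ref{quasitoric_thm1} expresses exactly this, and your reformulation via $\operatorname{Tor}^{\operatorname{Sym}(A_2)}_\ast(A,k)$ is an equivalent packaging of the same idea, with the observation that $(I_{SR})_2=0$ correctly identified as the point that makes the identification $S/\mathcal{J}\cong\operatorname{Sym}(A_2)$ canonical.
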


\begin{remark} 
The cohomology ring of a quasitoric manifold $M$ contains more information about its orbit polytope $P$ than the ring $H^\ast(\mathcal{Z}_P)$ does, but it does not determine the combinatorial type of $P$ in general. A simple polytope $P$ is called \emph{C-rigid} (over $k$) if its combinatorial type is determined by $H^\ast(M;k)$ for any quasitoric manifold $M$ over $P$---that is, if for every quasitoric manifold $M'$ over $P'$, a graded ring isomorphism $H^\ast(M;k)\cong H^\ast(M';k)$ implies a combinatorial equivalence $P\simeq P'$ (cf.\ \cite[Definition~1.2]{CPS}, \cite[Definition~A.11]{B}). The $3$-polytopes described in \cite[Example~3.4]{BEMPP} are neither $B$-rigid nor $C$-rigid, and examples of $C$-rigid polytopes that are not $B$-rigid are given in \cite{CP19}.

Although it follows from Theorem~\ref{main_decomp_th} and Lemma~\ref{M_Z_lem} that any product of $B$-rigid polytopes is $C$-rigid, it is not clear whether the class of $C$-rigid polytopes is closed under products. In this direction we have the following result:
\end{remark}

\begin{theorem} \label{quasitoric_thm1}
Let $M$ be a quasitoric manifold over a simple polytope $P$. If $P$ is indecomposable, then $H^\ast(M;k)$ cannot be decomposed as a nontrivial tensor product for any field $k$, and, in particular, $M$ is indecomposable up to homotopy.
\end{theorem}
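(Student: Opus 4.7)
The plan is to argue the contrapositive: suppose that $H^\ast(M;k) \cong B_1 \otimes B_2$ is a nontrivial tensor product decomposition of graded $k$-algebras for some field $k$, and deduce that $P$ must decompose as a nontrivial product of simple polytopes, contradicting indecomposability. The ``in particular'' assertion about homotopy indecomposability then follows at once: a homotopy equivalence $M \simeq X \times Y$ with $H^\ast(X;k)$ and $H^\ast(Y;k)$ both nontrivial over some field would yield, via the K\"unneth theorem, a forbidden tensor decomposition of $H^\ast(M;k)$.

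The crux is to transport the tensor decomposition from $H^\ast(M;k)$ to $H^\ast(\mathcal{Z}_P;k)$ and then appeal to Theorem~\ref{main_decomp_th}. First I would note that by Lemma~\ref{PD_factors}, and since $H^\ast(M;k)$ is generated in degree $2$ (hence so are $B_1$ and $B_2$), both factors are Poincar\'e duality algebras with $B_i^2 \neq 0$. Using the isomorphism $H^\ast(\mathcal{Z}_P;k) \cong \operatorname{Tor}^R(H^\ast(M;k),k)$ from equation~\eqref{tor_eq} (where $R = S/\mathcal{J}$ is a polynomial ring in $m-n$ variables whose degree-$2$ part is canonically identified with $H^2(M;k) \cong B_1^2 \oplus B_2^2$), I would then choose a basis $w_1,\ldots,w_a$ of $B_1^2$ together with $w_{a+1},\ldots,w_{m-n}$ of $B_2^2$ and reparametrize $R = k[w_1,\ldots,w_{m-n}]$. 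Multiplication by $w_i$ on $B_1\otimes B_2$ acts only through the first tensor factor for $i \leq a$ and only through the second for $i > a$, so the Koszul complex
\[
\bigl(H^\ast(M;k) \otimes \Lambda(u_1,\ldots,u_{m-n}),\; d(u_i) = w_i\bigr)
\]
splits as a tensor product of chain complexes $C_1 \otimes C_2$, where $C_1$ computes $\operatorname{Tor}^{R_1}(B_1,k)$ over $R_1 = k[w_1,\ldots,w_a]$ and $C_2$ analogously computes $\operatorname{Tor}^{R_2}(B_2,k)$ over $R_2 = k[w_{a+1},\ldots,w_{m-n}]$. The K\"unneth formula over the field $k$ then produces
\[
H^\ast(\mathcal{Z}_P;k) \cong \operatorname{Tor}^{R_1}(B_1,k) \otimes \operatorname{Tor}^{R_2}(B_2,k),
\]
and since each $B_i$ is an artinian, hence torsion, module over the positive-dimensional polynomial ring $R_i$, neither Tor factor reduces to $k$, so this decomposition is nontrivial.

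Once a nontrivial tensor decomposition of $H^\ast(\mathcal{Z}_P;k)$ is in hand, the hypothesis that $K = \partial P^\ast$ is Gorenstein$^\ast$ allows Theorem~\ref{main_decomp_th} to apply, yielding a join $K = K_1 \ast K_2$ with neither $K_i$ a simplex; by the join-product correspondence for dual complexes of simple polytopes, $P \cong P_1 \times P_2$ with both $P_i$ of positive dimension, contradicting indecomposability. The main obstacle will be the Koszul-complex splitting in the middle step: one must verify that an abstract graded-algebra tensor decomposition of $H^\ast(M;k)$ interacts correctly with the $R$-module structure used to compute $H^\ast(\mathcal{Z}_P;k)$. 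This compatibility is forced by choosing coordinates on $R$ adapted to the splitting $H^2(M;k) = B_1^2 \oplus B_2^2$, after which everything reduces to the formal K\"unneth-type calculation described above.
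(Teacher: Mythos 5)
Your proof is correct and follows essentially the same route as the paper: both proceed by choosing a basis of $H^2(M;k)$ adapted to the splitting $B_1^2\oplus B_2^2$, splitting the resulting Koszul complex to obtain (via K\"unneth) a nontrivial tensor decomposition of $H^\ast(\mathcal{Z}_P;k)$, and then invoking Theorem~\ref{main_decomp_th} (the paper goes through Corollary~\ref{tfae_cor}) to force a decomposition of $P$. The only cosmetic difference is that you pass through equation~\eqref{tor_eq} directly, whereas the paper re-establishes the same identification by a short chain of Koszul-complex isomorphisms citing \cite[Corollary 6.1.13(b)]{BH}; your observation that each $B_i$ is torsion over $R_i$, ensuring the Tor factors are nontrivial, is a welcome bit of explicitness that the paper leaves implicit.
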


\begin{proof}
We first note, with notation as above, that for any basis $z_1,\ldots,z_{m-n}$ of $H^2(M;k)$ there are isomorphisms
\begin{align*}
    H_*\big(\operatorname{Kos}^{{H^*(M;k)}}(z_1,\ldots,z_{m-n})\big)&\cong H_*\big( \operatorname{Kos}^{{k[P]}}(t_1,\ldots,t_{n},z_1,\ldots,z_{m-n})\big)\\
    &\cong   H_*\big(\operatorname{Kos}^{{k[P]}}(v_1,\ldots,v_m)\big)\\
    & \cong H_\ast(\mathcal{Z}_{P};k);
\end{align*}
the first isomorphism  
follows from \cite[Corollary 6.1.13 (b)]{BH}; the second isomorphism exists because $t_1,\ldots,t_{n},z_1,\ldots,z_{m-n}$ generate the same ideal as $u_1,\ldots,u_m$, and therefore have isomorphic Koszul complexes; the third isomorphism is Theorem \ref{t_H_Tor}.

Now suppose that $H^*(M;k)\cong A_1\otimes A_2$, with neither $A_1$ nor $A_2$ isomorphic to $k$. Take  bases $z_1,\ldots,z_i$ of $A_1^2$ and   $z_{i+1},\ldots,z_{m-n}$ of $A_2^2$. The K\"unneth isomorphism yields
\begin{align*}
H_*\big(\operatorname{Kos}^{A_1}(z_1,\ldots,z_{i})\big)\otimes H_*\big(&\operatorname{Kos}^{A_2}(z_{i+1},\ldots,z_{m-n})\big)
 \\  & \cong H_*\big( \operatorname{Kos}^{A_1\otimes A_2}(z_1,\ldots,z_{m-n})\big).
\end{align*}
We may use the chain of isomorphisms above to conclude that this is isomorphic to $H^\ast(\mathcal{Z}_{P};k)$, since $H^2(M;k)\cong A_1^2\oplus A_2^2$. Thereby we obtain a tensor product decomposition of $H^\ast(\mathcal{Z}_{P};k)$. By Corollary \ref{tfae_cor} the polytope $P$ decomposes accordingly into a product of polytopes.
\end{proof}

\begin{remark}
According to  Theorem~\ref{quasitoric_thm1},  if the cohomology $H^\ast(M)$ of a quasitoric manifold $M$ over $P$ admits a nontrivial tensor product decomposition, then the polytope $P$ decomposes as a product. We note that, unlike the case of moment-angle manifolds, the converse of this result is not true: many quasitoric manifolds over products of polytopes have indecomposable cohomology rings. For instance, the connected sum $\mathbb{C}P^3 \# \mathbb{C}P^3$ equipped with an appropriate $T^3$-action is a quasitoric manifold over the prism $\Delta^1\times \Delta^2$, while $H^\ast(\mathbb{C}P^3 \# \mathbb{C}P^3)$ admits no nontrivial tensor product decomposition.
\end{remark}
 
Since a simplex $\Delta^n$ is trivially $B$-rigid, Corollary~\ref{product_cor} implies that $B$-rigid polytopes are closed under products with simplices. We next consider a consequence of this fact for quasitoric manifolds and their orbit polytopes.

We first review a common method for constructing new quasitoric manifolds from a given one $M$. Let $E=\bigoplus_{i=0}^n \xi_i$ be a Whitney sum of complex line bundles over $M$. Removing the zero section and quotienting by the $\mathbb{C}^\ast$-action along each fibre, we obtain the projectivization $P(E)$ with fibre $\mathbb{C}P^n$. Starting with $M=\{ \mathrm{point} \}$, for example, and iterating this construction yields a tower of projective bundles 
\[ B_h \to B_{h-1} \to\cdots \to B_1 \to M=\{ \mathrm{point} \}, \]
called a \emph{generalized Bott tower}, where $B_j=P\left(\bigoplus_{i=0}^{n_j} \xi_i\right)$ for some complex line bundles $\xi_0,\ldots,\xi_{n_j}$ over $B_{j-1}$. In this case, the \emph{generalized Bott manifold} $B_h$ at height~$h$ is a quasitoric manifold over $\prod_{i=1}^h \Delta^{n_i}$. Note that since a product of simplices is $B$-rigid, generalized Bott manifolds provide a large class of quasitoric manifolds whose cohomology rings uniquely determine their orbit polytopes.

Let $G_m$ denote the $m$-gon, $m\geqslant 3$. In \cite{CP}, Choi and Park prove that the product of a simplex and a polygon is $B$-rigid and use this to obtain the following result. Following \cite{CP}, we call $P(E)$ a \emph{projective bundle} only when the vector bundle $E$ is a Whitney sum of complex line bundles, as above. 

\begin{theorem}[{\cite[Corollary~4.1]{CP}}]
Let $P(E)$ be a projective bundle over a $4$-dimensional quasitoric manifold. If the cohomology ring of a quasitoric manifold $M$ is isomorphic to that of $P(E)$, then the orbit space of $M$ is combinatorially equivalent to $\Delta^n \times G_m$.
\end{theorem}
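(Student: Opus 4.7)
The plan is to identify the orbit polytope of $P(E)$ explicitly and then to invoke the $B$-rigidity of that polytope, which will follow from Corollary~\ref{product_cor}. First, I would observe that any projective bundle $P(E)$ with $\mathbb{C}P^n$-fibre over a quasitoric manifold $N$ is itself quasitoric, and that projectivizing a Whitney sum of $n+1$ complex line bundles corresponds combinatorially to taking the Cartesian product of the orbit polytope of $N$ with the simplex $\Delta^n$. Since the base $N$ is $4$-dimensional, its orbit polytope is a simple $2$-polytope, necessarily an $m$-gon $G_m$ for some $m\geqslant 3$, and so $P(E)$ is a quasitoric manifold over $\Delta^n\times G_m$.

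Given a quasitoric manifold $M$ over a simple polytope $P'$ with $H^\ast(M;k)\cong H^\ast(P(E);k)$, Lemma~\ref{M_Z_lem} produces a graded ring isomorphism
\[
H^\ast(\mathcal{Z}_{P'};k) \cong H^\ast(\mathcal{Z}_{\Delta^n\times G_m};k),
\]
and the conclusion $P'\simeq \Delta^n\times G_m$ is then immediate from the definition of $B$-rigidity---provided the polytope $\Delta^n\times G_m$ is itself $B$-rigid.

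Simplices are $B$-rigid by \cite{CPS}, and the $m$-gon $G_m$ is also $B$-rigid (a fact that can be extracted from the classification of moment-angle manifolds over polygons as connected sums of products of spheres, or proved directly by using Theorem~\ref{PD_thm} to force any $K'=\mathrm{core}(K')$ with matching moment-angle cohomology to be Gorenstein$^*$, and then combining Hochster's formula with the observation that a $1$-dimensional Gorenstein$^*$ complex is necessarily a cycle whose length is pinned down by dimension data). Corollary~\ref{product_cor}---the main input of this paper---now yields $B$-rigidity of the product $\Delta^n\times G_m$ and completes the argument. The principal conceptual content is carried by Corollary~\ref{product_cor}, which bootstraps $B$-rigidity of the factors to their product; the remaining steps are either standard (identification of the orbit polytope of $P(E)$) or elementary ($B$-rigidity of the polygon factor), with the latter being the only mildly technical point.
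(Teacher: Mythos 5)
This theorem is cited from \cite{CP} and not actually reproved in the paper, but your argument is precisely the paper's own proof of the generalization (Theorem~\ref{quasitoric_thm2}) specialized to one projectivization ($h=1$) over a quasitoric manifold whose orbit polytope is $Q = G_m$: identify the orbit polytope of $P(E)$ as $\Delta^n\times G_m$, pass to the moment-angle cohomology via Lemma~\ref{M_Z_lem}, and invoke $B$-rigidity of the product via Corollary~\ref{product_cor}. Two small points are worth flagging. First, the step ``any projective bundle over a quasitoric manifold is itself quasitoric'' is not entirely formal: the paper's proof of Theorem~\ref{quasitoric_thm2} carefully lifts the locally standard $T^n$-action on the base to a linear action on each line bundle summand $\xi_j$ using Mundet i Riera's theorem \cite{M}, so that the induced action on $P(E)$ is locally standard with quotient $\Delta^n\times Q$; your proposal treats this as standard, which is defensible but should at least be cited. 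Second, you correctly observe that you also need the $B$-rigidity of the polygon factor $G_m$ as an external input (the paper's Theorem~\ref{quasitoric_thm2} takes $B$-rigidity of $Q$ as a hypothesis), and your sketch of that fact via the connected-sum-of-sphere-products description of $\mathcal{Z}_{G_m}$, or via the Gorenstein$^*$ argument, is sound.
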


Using Corollary~\ref{product_cor} we can extend the result above in two ways. First, since a $4$-dimensional quasitoric manifold is precisely a quasitoric manifold over a ($B$-rigid) polygon $G_m$, we can generalize to higher dimensions by replacing the role of the polygon with a $B$-rigid polytope $Q$ of arbitrary dimension. Second, we can consider towers of \emph{iterated} projective bundles starting with a quasitoric manifold over $Q$ since a $B$-rigid polytope remains so after taking a product with a simplex any number of times.

\begin{theorem} \label{quasitoric_thm2}
Let $Q$ be a $B$-rigid polytope and let $P(E)$ be an iterated projective bundle over any quasitoric manifold over $Q$. If the cohomology ring of a quasitoric manifold $M$ is isomorphic to that of $P(E)$, then the orbit space of $M$ is combinatorially equivalent to $\prod_{i=1}^h \Delta^{n_i} \times Q$.
\end{theorem}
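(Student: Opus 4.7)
The plan is to combine a structural observation about projective bundles with the main results of Section~\ref{rigidity_sec}. First, I would recall the standard fact that the projectivization $P(E)$ of a Whitney sum $E=\bigoplus_{i=0}^n \xi_i$ of complex line bundles over a quasitoric manifold $M_0$ with orbit polytope $P_0$ is itself a quasitoric manifold, with orbit polytope $\Delta^n \times P_0$ (see for instance \cite[\S7.8]{BP}, and compare the $h=1$, $Q=G_m$ case treated by Choi--Park \cite{CP}). Iterating this construction $h$ times, starting from any quasitoric manifold over $Q$, shows that the iterated projective bundle $P(E)$ in the statement is a quasitoric manifold over $P' \coloneqq \prod_{i=1}^h \Delta^{n_i} \times Q$.

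Next, since every simplex is trivially $B$-rigid and $Q$ is $B$-rigid by hypothesis, inductive application of Corollary~\ref{product_cor} shows that the polytope $P'$ is $B$-rigid.

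Finally, let $P$ denote the orbit polytope of the given quasitoric manifold $M$. The assumed isomorphism $H^\ast(M;k) \cong H^\ast(P(E);k)$, combined with Lemma~\ref{M_Z_lem}, yields a graded ring isomorphism $H^\ast(\mathcal{Z}_P;k)\cong H^\ast(\mathcal{Z}_{P'};k)$. The $B$-rigidity of $P'$ then forces a combinatorial equivalence $P \simeq P' = \prod_{i=1}^h \Delta^{n_i} \times Q$, which is exactly the desired conclusion.

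Nothing in this outline appears to present a serious obstacle: the identification of the orbit polytope of an iterated projective bundle with $\prod_{i=1}^h \Delta^{n_i} \times Q$ is a routine induction on $h$ using the product behavior of characteristic pairs, and the remaining two steps are direct invocations of Corollary~\ref{product_cor} and Lemma~\ref{M_Z_lem} respectively. The only place where one must be a little careful is in emphasizing that $B$-rigidity of $Q$ holds over \emph{every} field, which ensures the final rigidity step goes through irrespective of the coefficient field $k$ used to phrase the hypothesis.
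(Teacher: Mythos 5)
Your outline follows the same three steps as the paper's proof: (1) identify the orbit polytope of the iterated projective bundle $P(E)$ as $\prod_{i=1}^h \Delta^{n_i}\times Q$, (2) establish $B$-rigidity of that polytope via Corollary~\ref{product_cor}, and (3) combine Lemma~\ref{M_Z_lem} with $B$-rigidity to conclude. Steps (2) and (3) are handled correctly and exactly as in the paper.

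The gap is in step (1), which you dispatch as a ``standard fact.'' The assertion that $P(E)$ is a quasitoric manifold over $\Delta^{n}\times P_0$ for $E$ a sum of line bundles over an \emph{arbitrary} quasitoric manifold $M_0$ is precisely the nontrivial content of the paper's proof, and it is not a matter of pure ``product behavior of characteristic pairs.'' The issue is that one must know the locally standard $T^n$-action on $M_0$ \emph{lifts} to a linear action on each line bundle $\xi_j$, so that $E$ becomes a $T^n$-equivariant vector bundle and $P(E)$ inherits a suitable half-dimensional, locally standard torus action. The paper establishes this lift by observing that the Borel construction fibre inclusion $\iota\colon N\to ET^n\times_{T^n}N$ induces a surjection $\iota^\ast\colon H^2_{T^n}(N;\mathbb{Z})\to H^2(N;\mathbb{Z})$ (since $k[Q]\twoheadrightarrow k[Q]/(t_1,\dots,t_n)$), so each $c_1(\xi_j)$ has an equivariant lift, and then invoking Mundet i Riera's lifting theorem \cite[Theorem~1.1]{M}. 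Your citation to \cite[\S 7.8]{BP} covers the generalized Bott tower case (base built up from a point), but does not supply this lifting argument for a general quasitoric base. To make your proof complete you should either reproduce the lifting argument or cite a reference that explicitly proves line bundles over quasitoric manifolds admit compatible equivariant structures.

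Your closing remark about $B$-rigidity holding over every field is a reasonable point of caution and is consistent with the paper's conventions, though it does not add anything the paper's argument misses.
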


\begin{proof}
Let $N^{2n}$ be a quasitoric manifold with $N^{2n}/T^n=Q$. First assume $P(E)$ is a projective bundle over $N^{2n}$. Then $E=\bigoplus_{i=0}^k \xi_i$ for some complex line bundles $\xi_0,\ldots,\xi_k$ over $N^{2n}$, so $E$ has a $T^{k+1}$-action defined by coordinatewise multiplication along each fibre. Moreover, since the fibre inclusion $\iota\colon N^{2n} \to ET^n \times_{T^n} N^{2n}$ of the Borel construction induces a surjection $\iota^\ast\colon k[Q] \to k[Q]/(t_1,\ldots,t_n)$, the first Chern class of each line bundle satisfies $c_1(\xi_j) \in \iota^\ast H^2_{T^n}(N^{2n};\mathbb{Z})$ and it follows from \cite[Theorem~1.1]{M} that the locally standard $T^n$-action on $N^{2n}$ lifts to a linear action on the total space of each $\xi_j$. The resulting action on $E$ makes $E\to N^{2n}$ a $T^n$-equivariant vector bundle, and since this action commutes with the $T^{k+1}$-action described above, these define an action of $T^{n+k+1}$ on $E$. The induced $T^{n+k}$-action on $P(E)$ is a locally standard half-dimensional torus action giving $P(E)$ the structure of a quasitoric manifold over $\Delta^k\times Q$, since the orbit space $\mathbb{C}P^k/T^k$ of the standard action on the fibre can be identified with $\Delta^k$ and $N^{2n}/T^n=Q$.

Next, assuming $P(E)$ is an iterated projective bundle of height $h$ over $N^{2n}$, iterating the argument above shows that $P(E)$ is a quasitoric manifold over a polytope $\prod_{i=1}^h \Delta^{n_i} \times Q$. Finally, if $M$ is a quasitoric manifold over some simple polytope $P$ with $H^\ast(M;k)\cong H^\ast(P(E);k)$, then by Lemma~\ref{M_Z_lem} there is an isomorphism of graded rings \[H^\ast(\mathcal{Z}_P;k) \cong H^\ast(\mathcal{Z}_{\prod_{i=1}^h \Delta^{n_i} \times Q};k).\] Since $\prod_{i=1}^h \Delta^{n_i} \times Q$ is $B$-rigid by Corollary \ref{product_cor}, $P$ is combinatorially equivalent to $\prod_{i=1}^h \Delta^{n_i} \times Q$.
\end{proof}

\end{document}